\documentclass{amsart}

\usepackage{amsmath, amssymb,epic,graphicx,mathrsfs,enumerate}
\usepackage[all]{xy}

\setlength{\parskip}{2mm}

\usepackage{latexsym}
\usepackage{longtable}
\usepackage{epsfig}
\usepackage{hhline}

\newtheorem{proposition}{Proposition}[section]

\newtheorem{conjecture}[proposition]{Conjecture}
\newtheorem{lemma}[proposition]{Lemma}
\newtheorem{corollary}[proposition]{Corollary}
\newtheorem{theorem}[proposition]{Theorem}

\begin{document}

\title[A generalization of the diameter bound]{A generalization of the diameter bound of Liebeck and Shalev for finite simple groups}

\author[Attila Mar\'oti]{Attila Mar\'oti}
\address{Alfr\'ed R\'enyi Institute of Mathematics, Hungarian Academy of Sciences, Re\'altanoda utca 13-15, H-1053, Budapest, Hungary}
\email{maroti.attila@renyi.mta.hu}

\author[L\'aszl\'o Pyber]{L\'aszl\'o Pyber}
\address{Alfr\'ed R\'enyi Institute of Mathematics, Hungarian Academy of Sciences, Re\'altanoda utca 13-15, H-1053, Budapest, Hungary}
\email{pyber.laszlo@renyi.mta.hu}

\subjclass[2010]{20D06, 20D40, 20G05}
\keywords{normal set, conjugacy class, finite simple group}
\thanks{The project leading to this application has
received funding from the European Research Council (ERC) under the
European Union's Horizon 2020 research and innovation programme
(grant agreement No. 741420). Both authors were 
partly supported by the National Research, Development and Innovation Office 
(NKFIH) Grant No.~K115799. The first author was also supported by the National Research, Development and Innovation Office 
(NKFIH) Grant No.~K132951.}

\begin{abstract}
Let $G$ be a non-abelian finite simple group. A famous result of Liebeck and Shalev is that there is an absolute constant $c$ such that whenever $S$ is a non-trivial normal subset in $G$ then $S^{k} = G$ for any integer $k$ at least $c \cdot (\log|G|/\log|S|)$. This result is generalized by showing that there exists an absolute constant $c$ such that whenever $S_{1}, \ldots , S_{k}$ are normal subsets in $G$ with $\prod_{i=1}^{k} |S_{i}| \geq {|G|}^{c}$ then $S_{1} \cdots S_{k} = G$.         
\end{abstract}
\maketitle

\begin{center}
{\it To the memory of Jan Saxl.}
\end{center}

\bigskip

\section{Introduction}

A normal subset of a finite group $H$ is defined to be any union of conjugacy classes of $H$. A normal subset is called trivial if it is equal to the identity. A well-known theorem of Liebeck and Shalev \cite[Theorem 1.1]{LS} is that there is a constant $c$ such that whenever $S$ is a non-trivial normal subset in a non-abelian finite simple group $G$ then $S^{k} = G$ for any integer $k$ at least $c \cdot (\log|G|/\log|S|)$.

Gill, Pyber and Szab\'o propose the following conjecture \cite[Conjecture 2]{GPSz}.

\begin{conjecture}
\label{c1}
There exists a constant $c$ such that if $S_1, \ldots , S_k$ are normal subsets of a non-abelian finite simple group $G$ satisfying $\prod_{i=1}^{k} |S_{i}| \geq {|G|}^{c}$, then $S_{1} \cdots S_{k} = G$. 
\end{conjecture}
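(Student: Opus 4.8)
The plan is to prove Conjecture~\ref{c1} by pushing through the probabilistic/character-theoretic method behind the Liebeck--Shalev theorem, and by treating the alternating groups separately by combinatorial means. For a normal subset $S\subseteq G$ and $\chi\in\mathrm{Irr}(G)$ put $\xi_S(\chi)=\sum_{x\in S}\chi(x)$ and $r_S(\chi)=\xi_S(\chi)/(|S|\chi(1))$, so that $|r_S(\chi)|\le 1$ and the Fourier transform of the uniform probability measure $P_S$ on $S$ is the scalar $\overline{r_S(\chi)}$ on the $\chi$-isotypic part --- this scalarity is exactly where normality of $S$ enters. The convolution identity for class functions then gives, for every $g\in G$,
\begin{equation*}
(P_{S_1}*\cdots*P_{S_k})(g)=\frac{1}{|G|}\left(1+\sum_{1\ne\chi\in\mathrm{Irr}(G)}\chi(1)\,\chi(g)\prod_{i=1}^{k}\overline{r_{S_i}(\chi)}\right),
\end{equation*}
so it suffices to show $\sum_{\chi\ne 1}\chi(1)\,|\chi(g)|\prod_{i}|r_{S_i}(\chi)|<1$ for all $g\in G$.

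The heart of the matter is an estimate of the shape $|r_S(\chi)|\le \chi(1)^{-\gamma\log|S|/\log|G|}$ for an absolute $\gamma>0$, that is, a version for normal subsets of the character-ratio bounds of Liebeck--Shalev and of Bezrukavnikov--Liebeck--Shalev--Tiep/Larsen--Shalev--Tiep; for a normal subset that is a union of many small classes I would prove it by splitting $S$ into a ``large-class'' part and a ``small-class'' part and estimating $\xi_S(\chi)$ on each. Granting such a bound (and bounding $|\chi(g)|$ by the same estimate applied to the class of $g$ when $|g^G|$ is not too small), one gets
\begin{equation*}
\sum_{\chi\ne 1}\chi(1)\,|\chi(g)|\prod_{i}|r_{S_i}(\chi)|\ \le\ \sum_{\chi\ne 1}\chi(1)^{2-\gamma c'},\qquad c'=\frac{\log\prod_i|S_i|}{\log|G|}\ \ge\ c,
\end{equation*}
and the Liebeck--Shalev estimates on the zeta function $\zeta^G(s)=\sum_{1\ne\chi}\chi(1)^{-s}$ provide an absolute $s_0$ with $\zeta^G(s_0)<1$ for every non-abelian finite simple $G$; choosing $c$ with $\gamma c-2\ge s_0$ then makes the sum $<1$ and finishes these cases. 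For groups of bounded Lie rank the passage to conjugacy classes is essentially free here, since $k(G)$ is bounded and one may replace each $S_i$ by its largest class at the cost of an absolute factor in $c$; the finitely many groups of small order left aside by the zeta-function step, and the elements $g$ lying in unusually small classes (which must be fitted into a short sub-product $S_1\cdots S_m$ by hand), are easy.

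The main obstacle is that all of this genuinely fails for the alternating groups $A_n$, precisely at the small-degree characters evaluated at small-support elements: if $S$ is the class of a double transposition and $\chi$ is the standard $(n-1)$-dimensional character, then $|r_S(\chi)|=1-4/(n-1)$, so no bound $|r_S(\chi)|\le\chi(1)^{-\gamma\log|S|/\log|G|}$ with a fixed $\gamma$ can hold, and correspondingly the term $\chi(1)^2|r_S(\chi)|^k$ need not be $<1$ at the threshold $\prod_i|S_i|\ge|G|^c$. Here I would argue differently, using the Liebeck--Shalev theorem itself (the case $S_1=\cdots=S_k$) together with the fine information on $S_n$-characters due to Larsen--Shalev and elementary manipulation of cycle types: a normal subset of $A_n$ of size a large power of $|G|$ forces a class whose cycle type is rich enough, and a product of sufficiently many such classes can be steered onto any prescribed even permutation by padding with cancelling pairs, so that $S_1\cdots S_k=A_n$ once $\sum_i\log|S_i|$ is a large enough multiple of $\log|A_n|$. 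The hard part will be making the Lie-type estimate for unions of classes and the uniformity in $n$ of this last argument go through with one and the same absolute constant $c$; that is where essentially all of the work lies, and it is also why it pays to invoke the Liebeck--Shalev theorem as a black box rather than to reprove it.
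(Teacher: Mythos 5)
Your plan is a high-level sketch rather than a proof, and the pieces you leave to be filled in are exactly where the work lies. The central missing piece is the claimed estimate $|r_S(\chi)|\le \chi(1)^{-\gamma\log|S|/\log|G|}$ for an arbitrary normal subset $S$ of a classical group. You acknowledge you have not proved it, and the suggested ``split $S$ into large-class and small-class parts'' does not obviously close the gap: $r_S(\chi)$ is a convex combination of the class ratios $\chi(g_i)/\chi(1)$ with weights $|C_i|/|S|$, and since the exponent you want depends on $\log|S|$ while the available class-wise bounds depend on $\log|C_i|$, there is no easy passage from one to the other (convexity gives you only the maximum of the class ratios, not an $|S|$-averaged exponent). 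The other gaps are of the same kind: the $A_n$ case is reduced to ``elementary manipulation of cycle types'' and an appeal to Larsen--Shalev without an actual argument, and the elements $g$ in small classes are declared ``easy'' without saying how to fit them into a short sub-product uniformly in $n$.

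The paper sidesteps your hardest step entirely, and it is worth seeing how. Rather than trying to bound $\xi_S(\chi)$ for a general normal set $S$, it first shows (via the Gill--Pyber--Short--Szab\'o growth lemma, which is where Liebeck--Shalev enters) that one may assume each $S_i$ has size at least $|G|^\delta$; it then uses the upper bounds $k(\mathrm{Alt}(n))\le 2^{n-1}$ and $k(\mathrm{Cl}(n,q))\le q^{dn}$ to extract from each large $S_i$ a single conjugacy class $C_i\subseteq S_i$ with $|C_i|\ge |G|^\alpha$. After that reduction, only character bounds for individual large classes are needed (Guralnick--Larsen--Tiep: $|\chi(g)|\le\chi(1)^{1/10}$ when $|C_G(g)|\le|G|^\mu$, combined with $\zeta^G(7/10)\to 1$), which is a much weaker and already-available input than a normal-subset character bound. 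For the alternating groups the paper uses Rodgers' theorem on the invariant $\delta(C)=n-(\text{number of }\langle x\rangle\text{-orbits})$ together with a small combinatorial device to turn pairs of non-$\mathrm{Sym}(n)$-classes into a single $\mathrm{Sym}(n)$-class with many fixed points, rather than your more diffuse cycle-type ``steering'' argument. Finally, the small-class elements are swept up not ``by hand'' but by an elementary pigeonhole (Jacobson) count: $S_1S_2$ misses at most $k(G)|G|^\gamma$ elements, and $|S_3|>k(G)|G|^\gamma$, so $(S_1S_2)S_3=G$.

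In short, the correct strategic move you are missing is to \emph{first} replace each normal set by a large conjugacy class inside it, using a counting bound on $k(G)$, before invoking any character theory. Once you see this, the normal-subset character bound you were struggling to formulate becomes unnecessary, and all the remaining steps reduce to known results (Rodgers, Guralnick--Larsen--Tiep, Liebeck--Shalev zeta bounds) applied to single classes.
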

 
This is a weaker form of another conjecture of Gill, Pyber, Szab\'o \cite[Conjecture 1]{GPSz} which is known to hold \cite[Theorem 2]{GPSz} for finite simple groups of Lie type of bounded rank. 

Rodgers \cite[Corollary 2.4]{R} shows that if $C_{1}, \ldots , C_{k}$ are conjugacy classes of the symmetric group $\mathrm{Sym}(n)$ of degree $n \geq 5$ and $\prod_{i=1}^{k} |C_{i}| > n^{6(n-2)}$, then $C_{1} \cdots C_{k}$ is equal to the alternating group $\mathrm{Alt}(n)$ of degree $n$ or to $\mathrm{Sym}(n) \setminus \mathrm{Alt}(n)$. Let $H$ be a non-solvable special linear group $\mathrm{SL}(n,q)$ or a projective special linear group $\mathrm{PSL}(n,q)$ with $n \geq 2$ and $q$ a prime power. Rodgers and Saxl \cite{RS} prove that if $C_{1}, \ldots , C_{k}$ are conjugacy classes of $H$ with the property that $\prod_{i=1}^{k} |C_{i}| \geq {|H|}^{12}$ then $C_{1} \cdots C_{k} = H$. 

This in particular implies that the constant $c$ in the Liebeck-Shalev theorem \cite[Theorem 1.1]{LS} may be taken to be $12$
in the case of $\mathrm{PSL}(n,q)$. It would be interesting to show that the theorem holds with a similar small constant
$c$ for all non-abelian finite simple groups.

The aim of this paper is to prove Conjecture \ref{c1}.  

\begin{theorem}
\label{main}
Conjecture \ref{c1} is true. 
\end{theorem}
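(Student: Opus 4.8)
The plan is to carry out, for a product of distinct normal subsets, the representation-theoretic argument that proves the Liebeck--Shalev theorem \cite[Theorem 1.1]{LS}, after first disposing by \cite{GPSz} of the families where that argument is most delicate.

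First, the reductions. We may assume $|G|$ is large, since the finitely many simple groups of bounded order --- in particular the sporadic groups --- are covered once $c$ is taken large enough. We may also discard every $S_i$ equal to $\{1\}$, so that each $S_i$ is a union of conjugacy classes containing a non-trivial one; then $S_1\cdots S_k$ is a non-trivial normal subset, and, because unions of conjugacy classes commute under multiplication ($AB=BA$), the product is independent of the order of the factors and equals $G$ as soon as some sub-product does --- which lets us work with a convenient sub-collection. Finally, if $G$ is of Lie type of rank at most a suitable absolute constant $r_0$, then \cite[Theorem 2]{GPSz} yields even the stronger Conjecture~1 of \cite{GPSz} for $G$, hence the assertion we want; so from now on $G=\mathrm{Alt}(n)$ with $n$ large, or $G$ is a classical group of rank greater than $r_0$. (For $\mathrm{PSL}(n,q)$ one could instead quote \cite{RS}, and for $\mathrm{Alt}(n)$ an argument of the type in \cite{R}, but the estimate below is meant to be uniform.)

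Next, the character sum. Fix $g\in G$ and set $\widehat S(\chi)=\sum_{x\in S}\chi(x)$ for a normal subset $S$ and $\chi\in\mathrm{Irr}(G)$. By the standard identity for the class algebra, the number of tuples $(s_1,\dots,s_k)\in S_1\times\cdots\times S_k$ with $s_1\cdots s_k=g$ equals
$$\frac1{|G|}\sum_{\chi\in\mathrm{Irr}(G)}\frac{\widehat{S_1}(\chi)\cdots\widehat{S_k}(\chi)\,\overline{\chi(g)}}{\chi(1)^{k-1}},$$
with the principal character contributing the positive main term $|S_1|\cdots|S_k|/|G|$. Thus it suffices to prove, for every $g$,
$$\sum_{1\ne\chi\in\mathrm{Irr}(G)}\frac{|\widehat{S_1}(\chi)|\cdots|\widehat{S_k}(\chi)|\,|\chi(g)|}{\chi(1)^{k-1}}\;<\;|S_1|\cdots|S_k|.$$
To estimate the left side, bound each $|\widehat{S_i}(\chi)|$ by the smaller of $(|S_i||G|)^{1/2}$, which comes from Parseval's identity $\sum_\chi|\widehat S(\chi)|^2=|S||G|$, and $|S_i|\cdot\max_{x\ne 1}|\chi(x)|$; and bound $|\chi(g)|$ using quantitative character-value estimates sensitive to $g$ as well as to $\chi$ --- the Larsen--Shalev bounds for $\mathrm{Alt}(n)$, the Larsen--Shalev--Tiep and Guralnick--Larsen--Tiep bounds for Lie type --- together with the fact that the Witten zeta value $\zeta^G(s)=\sum_{1\ne\chi}\chi(1)^{-s}$ tends to $0$ as $|G|\to\infty$ for each fixed $s>0$ in these families. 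Splitting $\mathrm{Irr}(G)\setminus\{1\}$ by the size of $\chi(1)$, and the elements $g$ by the size of their conjugacy class, and feeding in the hypothesis $\prod_i|S_i|\ge|G|^c$ with $c$ a sufficiently large absolute constant, one should obtain the displayed inequality.

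The step I expect to cause the most trouble is precisely this last estimate, made uniform. The obstacle has two intertwined sources. First, the naive bounds are inadequate: when the $S_i$ are dominated by elements of small support --- say $S_i$ a class of transvections, or of $3$-cycles in $\mathrm{Alt}(n)$ --- the ratio $|\chi(x)|/\chi(1)$ for the smallest-degree characters is close to $1$, so replacing $|\chi(g)|$ by $\chi(1)$ loses far too much, and one must use the sharp support-dependent character bounds, handling the (few) elements of very small support by a separate argument and the low-degree characters (of which there are boundedly many) individually; this is already the delicate part of the Liebeck--Shalev proof, and is exactly the regime where $k$ is forced to be large. Second, the sizes $|S_i|$ may be distributed very unevenly, so the estimate must interpolate smoothly between the ``many small sets'' regime (governed, as in \cite{LS}, by the smallness of $\zeta^G$) and the ``few large sets'' regime (governed by the Parseval bound), and must do so with a single constant $c$ valid for $\mathrm{Alt}(n)$ and for all large-rank classical groups simultaneously. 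Extracting that one absolute $c$ is the crux.
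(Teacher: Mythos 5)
There is a genuine gap, and you have honestly flagged it yourself: the displayed inequality
$$\sum_{1\ne\chi\in\mathrm{Irr}(G)}\frac{|\widehat{S_1}(\chi)|\cdots|\widehat{S_k}(\chi)|\,|\chi(g)|}{\chi(1)^{k-1}}<|S_1|\cdots|S_k|$$
is never established, and extracting ``that one absolute $c$'' is exactly the content of the theorem. The regime that you identify as the obstacle --- many $S_i$ that are single classes of elements of tiny support (transvections, $3$-cycles), where the smallest nontrivial characters have $|\chi(x)|/\chi(1)$ close to $1$ --- is precisely where a naive bound $|\widehat{S_i}(\chi)|\le|S_i|\max_x|\chi(x)|$ loses control, and where the Parseval bound $(|S_i||G|)^{1/2}$ is also useless (since $|S_i|\ll|G|$). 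Carrying this out uniformly over $k$ factors of wildly varying sizes, over $\mathrm{Alt}(n)$ and over all classical groups simultaneously, is not a calculation that follows from the cited character bounds by ``splitting and feeding in $\prod|S_i|\ge|G|^c$''; it would essentially amount to reproving the Liebeck--Shalev theorem in a strictly harder, non-uniform setting. No argument is given for why the pieces fit, so the plan as written is not a proof.

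The paper avoids this entirely by a different first move, which is the key idea you are missing. Rather than attack the $k$-fold character sum directly, it invokes a growth result of Gill, Pyber, Short and Szab\'o (quoted here as Lemma~\ref{GPSSz}): for any $\delta$ there is $\epsilon>0$ with $|AB|\ge|A||B|^\epsilon$ whenever $B$ is normal and $|A|\le|G|^\delta$. Iterating this (Lemmas~\ref{l1}--\ref{ez}) lets one greedily group consecutive $S_i$ into blocks $A_1,\dots,A_j$ each satisfying $|A_m|\ge|G|^\delta$, and the hypothesis $\prod|S_i|\ge|G|^{c}$ forces $j$ to exceed any prescribed bound once $c$ is large. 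This collapses Conjecture~\ref{c1} to Theorem~\ref{uj}: a statement about a \emph{bounded} number (eight) of normal sets each of size at least $|G|^\delta$. In that regime one can pass to large conjugacy classes via a class-number bound (Lemmas~\ref{k}--\ref{ezz}), and then the character sum is genuinely easy --- for classical groups only a triple product is needed, and since the classes are large the centralizers are small, so \cite[Theorem~1.3]{GLT} gives $|\chi(g)|\le\chi(1)^{1/10}$ outright and the Witten zeta bound of Liebeck--Shalev finishes it; for $\mathrm{Alt}(n)$ one instead uses Rodgers' combinatorial invariant $\delta(C)$. Note also that \cite[Theorem~1.1]{LS} is not bypassed but used, once, inside the proof of Lemma~\ref{GPSSz}. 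Your reduction to $\mathrm{Alt}(n)$ and large-rank classical groups via \cite[Theorem~2]{GPSz} does agree with the paper's Section~3, but without the GPSSz growth step the rest of your plan does not reduce to anything tractable.

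A small additional caution: your remark that commutativity of normal sets ``lets us work with a convenient sub-collection'' is only useful in one direction ($S_{i_1}\cdots S_{i_j}=G$ implies $S_1\cdots S_k=G$); passing to a sub-collection weakens the size hypothesis $\prod|S_i|\ge|G|^c$, so it is not clear what sub-collection you intend to choose, and the paper's block decomposition is precisely the non-trivial way to make such a selection pay off.
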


The argument relies on the Liebeck-Shalev Theorem \cite[Theorem 1.1]{LS} via \cite[Proposition 5.2]{GPSSz}, on deep character theoretic results of Guralnick, Larsen and Tiep \cite[Theorem 1.3]{GLT} and of Liebeck and Shalev \cite[Theorem 1.1]{LS2}, and in the case of alternating groups on results of Rodgers \cite{R}. We actually give a new, different proof of the result of Rodgers and Saxl \cite{RS} with a weaker, non-explicit constant in the exponent. 

A by-product of the proof of Theorem \ref{main} is the following result.  

\begin{theorem}
\label{uj}
Let $G$ be a non-abelian finite simple group. There exists a constant $\delta$ with $0 < \delta < 1$ such that if $S_{1}, \ldots , S_{8}$ are normal subsets in $G$ each of size at least ${|G|}^{\delta}$, then $S_{1} \cdots S_{8} = G$ in case $G$ is an alternating group and $S_{1}S_{2}S_{3} = G$ in case $G$ is different from an alternating group.
\end{theorem}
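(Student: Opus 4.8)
The plan is to split into three cases according to the type of the non-abelian finite simple group $G$: groups of Lie type of bounded rank, groups of Lie type of unbounded rank, and alternating groups (with the finitely many sporadic groups handled trivially by choosing $\delta$ small enough). For each family I want to combine a width-type theorem (giving that a small number of copies of any single large normal subset fills $G$) with a covering-number estimate coming from character theory, so that a product of three (respectively eight) normal subsets of polynomial size in $|G|$ exhausts $G$.

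\medskip
\noindent\textbf{Groups of Lie type.} For $G$ of Lie type I would invoke the Guralnick--Larsen--Tiep bound \cite[Theorem 1.3]{GLT} together with \cite[Theorem 1.1]{LS2}: these give an absolute constant such that if $S_1, S_2, S_3$ are normal subsets each of size at least $|G|^{\delta}$ then $S_1 S_2 S_3 = G$, provided $\delta$ is chosen close enough to $1$ that the relevant character-ratio sums converge. The point is that for each nontrivial irreducible character $\chi$ of $G$ one controls $\sum_{\chi\neq 1} \frac{|\chi(x_1)\chi(x_2)\chi(x_3)|}{\chi(1)}$ by the bound $|\chi(x)| \le \chi(1)^{1-\alpha}$ and the estimate $\sum_\chi \chi(1)^{-s} \to 1$ as $s\to\infty$ (Liebeck--Shalev's $\zeta$-function estimate), and then a standard Frobenius-type formula for the number of ways to write a given element as a product $s_1 s_2 s_3$ with $s_i$ in the respective classes shows this count is positive once the classes are large. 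Summing over the classes contained in each $S_i$ and using $\prod |S_i| \ge |G|^{3\delta}$ concludes this case. In particular this handles all $G$ not an alternating group with just three factors, which is exactly the assertion of Theorem \ref{uj} in that case.

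\medskip
\noindent\textbf{Alternating groups.} For $G = \mathrm{Alt}(n)$ I would use the results of Rodgers \cite{R}, specifically the Sym$(n)$ covering statement \cite[Corollary 2.4]{R}: if $C_1,\dots,C_k$ are conjugacy classes of $\mathrm{Sym}(n)$ with $\prod |C_i| > n^{6(n-2)}$ then $C_1\cdots C_k$ equals $\mathrm{Alt}(n)$ or its nontrivial coset. The obstacle here is the parity/coset issue: a product of normal subsets of $\mathrm{Alt}(n)$ need not a priori land in $\mathrm{Alt}(n)$ when viewed through $\mathrm{Sym}(n)$, and small $n$ plus the quantitative gap between $n^{6(n-2)}$ and $|G|^\delta = (n!/2)^\delta$ must be reconciled. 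I would pass to $\mathrm{Sym}(n)$, note that a conjugacy class $C$ of $\mathrm{Alt}(n)$ either is a full $\mathrm{Sym}(n)$-class or is half of one, so that a normal subset $S$ of $\mathrm{Alt}(n)$ of size $|G|^\delta$ contains, or is contained in, an $\mathrm{Sym}(n)$-normal subset of comparable size; then apply \cite[Corollary 2.4]{R} with enough factors. Taking $k = 8$ and using that $8\delta \cdot \log(n!/2) > 6(n-2)\log n$ for $\delta$ bounded below and $n$ large gives $S_1\cdots S_8 \supseteq \mathrm{Alt}(n)$, since each $S_i \subseteq \mathrm{Alt}(n)$ forces the product into $\mathrm{Alt}(n)$, hence equality. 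A few small values of $n$ are absorbed by shrinking $\delta$.

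\medskip
\noindent\textbf{Deducing Theorem \ref{uj} and the main obstacle.} Combining the two cases, and choosing $\delta$ as the minimum of the constants arising (and small enough for the finitely many sporadic and small-rank exceptional cases and small alternating groups), yields the stated dichotomy: three factors suffice off the alternating groups, eight suffice for alternating groups. I expect the main obstacle to be bookkeeping in the alternating case — controlling the passage between $\mathrm{Alt}(n)$- and $\mathrm{Sym}(n)$-normal subsets, matching the polynomial-in-$|G|$ size hypothesis against Rodgers' $n^{6(n-2)}$ threshold uniformly in $n$, and checking that the number of factors needed is genuinely bounded (here $8$) rather than growing with $n$ — together with verifying that the character-theoretic input for Lie type groups applies uniformly across all ranks with a single constant. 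The ingredients are all available: \cite[Theorem 1.3]{GLT}, \cite[Theorem 1.1]{LS2}, \cite[Theorem 1.1]{LS} (via \cite[Proposition 5.2]{GPSSz}), and \cite[Corollary 2.4]{R}; the work is in assembling them with one uniform $\delta$.
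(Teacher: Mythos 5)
Your overall structure — reduce to large conjugacy classes, character theory for classical groups, Rodgers for alternating groups — matches the paper's. The classical-group half is essentially sound and in fact slightly more direct than the paper's route: you propose a three-class Frobenius count showing directly that every $g \in G$ lies in $S_1 S_2 S_3$, using $|\chi(c_i)| \leq \chi(1)^{1/10}$ for $i=1,2,3$, the trivial bound $|\chi(g)|\leq\chi(1)$, and $\sum_{\chi\neq 1}\chi(1)^{-7/10}\to 0$; the paper instead shows $D_3 \subseteq D_1D_2$ for large classes and then invokes Jacobson's lemma with the third factor. Both rest on the same inputs (GLT and Liebeck--Shalev's $\zeta$-function estimate, plus the reduction from normal sets of size $|G|^\delta$ to classes of size $|G|^\alpha$ via the class-number bounds, which you leave implicit). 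However, your treatment of bounded-rank and exceptional groups is too brief: Lemma \ref{GLT} is stated only for classical groups and Lemma \ref{zeta} excludes $\mathrm{PSL}(2,q)$, so these cases need a separate argument — the paper supplies Gowers' quasirandomness trick (Lemma \ref{trick}), which you never mention. Also, \cite[Proposition 5.2]{GPSSz} and \cite[Theorem 1.1]{LS} play no role in Theorem \ref{uj}; they are used only in the reduction of Theorem \ref{main} to Theorem \ref{uj}.

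The genuine gap is in the alternating case. You propose to "pass to $\mathrm{Sym}(n)$", observing that each $\mathrm{Alt}(n)$-class either is a full $\mathrm{Sym}(n)$-class or is half of one, and then apply Rodgers' Corollary 2.4. The split case is exactly where this breaks: if $C$ is a split $\mathrm{Alt}(n)$-class then $C \subsetneq C'$ for a $\mathrm{Sym}(n)$-class $C'$ with $|C'|=2|C|$, and knowing $C_1'\cdots C_8' \supseteq \mathrm{Alt}(n)$ does not give $C_1\cdots C_8 \supseteq \mathrm{Alt}(n)$; the "contains, or is contained in" hedge does not resolve this, because the direction you need fails. This is not a corner case: split classes (elements with all cycle lengths distinct and odd) have very few cycles, hence enormous classes, so they easily have size $\geq |G|^\delta$. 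The paper's argument here is a genuine combinatorial step, not bookkeeping: for each split class the underlying element has at most $\sqrt{n}$ cycles; pairing two split classes $S_i$, $S_j$ and conjugating one by $(12)(34)$ produces a product element with at least two fixed points (hence its $\mathrm{Alt}(n)$-class is also a $\mathrm{Sym}(n)$-class) and at most $2\sqrt{n}+2$ cycles, so $\delta(C_{i,j})\geq n-2\sqrt{n}-2$; only then does Rodgers' Theorem 2.3 (the $\delta$-invariant form, not Corollary 2.4) close the argument, and this pairing is the reason eight factors rather than three are needed for $\mathrm{Alt}(n)$. This construction is the idea your proposal is missing.
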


For $G$ a classical simple group, Theorem \ref{uj} was obtained independently using different techniques by Larsen, Shalev, Tiep in \cite[Theorem 7.4]{LST}.

Note that if $\delta$ is chosen appropriately, Theorem \ref{uj} is trivially true for groups of orders less than any given universal constant and it is easy to establish for groups of Lie type of bounded rank, in particular for exceptional groups. 

Theorem \ref{uj} implies various results in \cite{Shalev}. In particular, it improves the following key result \cite[Corollary 2.5]{Shalev}. For every $\epsilon > 0$ there is a number $r(\epsilon)$ such that whenever $G$ is a finite simple group of Lie type of Lie rank $r \geq r(\epsilon)$ defined over the field with $q$ elements, $C_{1}$, $C_{2}$, $C_{3}$ are conjugacy classes of $G$, $x_{1} \in C_{1}$, $x_{2} \in C_{2}$, $x_{3} \in C_{3}$, and $|C_{G}(x_{1})| |C_{G}(x_{2})| |C_{G}(x_{3})| \leq q^{(4-\epsilon)r}$, then $C_{1}C_{2}C_{3} = G$.

Theorem \ref{uj} could be considered as an alternative approach to a conjecture of Thompson. Let $G$ be a non-abelian finite simple group. Thompson's Conjecture states that $G$ has a conjugacy class $C$ such that $C^{2} = G$. This is established for alternating groups \cite{C} and for finite simple groups of Lie type \cite{EllersGordeev} defined over fields of size larger than $8$. 

There are other results in the literature which may be considered as approximations of Thompson's conjecture. For every sufficiently large $G$ there is a conjugacy class $C$ such that $C^{3} = G$ by \cite[Corollary 2.3]{Shalev}. It is also known \cite[Theorem 1.4]{GuralnickMalle} that for every $G$ there are conjugacy classes $C$ and $D$ of $G$ such that $CD \cup \{ 1 \} = G$ (this is an extension of results in \cite{MalleSaxlWeigel} and also of \cite[Theorem 1.1.4]{LarsenShalevTiep}).   

It would be interesting to find the smallest integer $r$ with $3 \leq r \leq 8$ such that in Theorem \ref{uj} we get $S_{1} \cdots S_{r} = G$. (Note that if $S_{1}$ and $S_{2}$ are conjugacy classes (of size at least $|G|^{\delta}$) with $S_{1} \not= S_{2}^{-1}$ then $S_{1}S_{2} \not= G$.) It would also be interesting to determine the smallest value of $\delta$ for which Theorem \ref{uj} holds.

\section{Small normal sets}

In this section it is shown that in order to prove Theorem \ref{main} we may assume that each normal subset $S_i$ of $G$ is large. 

The starting point is \cite[Proposition 5.2]{GPSSz}.

\begin{lemma}[Gill, Pyber, Short, Szab\'o]
\label{GPSSz}
For every $\delta$ with $0 < \delta < 1$ there exists $\epsilon > 0$ such that for any non-abelian finite simple group $G$ and subsets $A$ and $B$ of $G$ with $B$ normal in $G$ and $|A| \leq {|G|}^{\delta}$ we have $$|AB| \geq |A|{|B|}^{\epsilon}.$$ 
\end{lemma}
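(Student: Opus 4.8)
The plan is to derive this from the Liebeck--Shalev diameter bound by way of a Plünnecke--Ruzsa-type inequality for the normal set $B$. One may assume $A \neq \emptyset$ and $B \neq \{1\}$, since otherwise $AB = A$ and the inequality holds trivially; then $B$ is a non-trivial normal subset of the simple group $G$, so $|B| \ge 2$. By \cite[Theorem 1.1]{LS} there is an absolute constant $c$ with $B^{k} = G$ whenever $k \ge c\log|G|/\log|B|$; set $m := \lceil c\log|G|/\log|B| \rceil$, so that $B^{m} = G$, hence $AB^{m} = G$, and note $m \le (c+1)\log|G|/\log|B|$ because $|B| \le |G|$.

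The heart of the argument is to pick a non-empty $A^{*} \subseteq A$ minimising $|XB|/|X|$ over non-empty $X \subseteq A$ and to invoke the Plünnecke--Ruzsa inequality in the form
\[
|A^{*}B^{n}| \;\le\; \Big(\frac{|A^{*}B|}{|A^{*}|}\Big)^{n}|A^{*}| \;\le\; \Big(\frac{|AB|}{|A|}\Big)^{n}|A^{*}| \qquad (n \ge 0),
\]
where the second step uses the minimality of $A^{*}$. This bound is classical when $G$ is abelian, and the one place requiring attention — the main obstacle — is that here $G$ need not be abelian. It should nonetheless go through, because $B$ (and therefore every power $B^{n}$) is \emph{normal}, hence commutes setwise with every subset of $G$, i.e.\ $BX = XB$ for all $X \subseteq G$; this setwise commutation is the only feature of the abelian case that the standard (Petridis-style) proof of the inequality uses, so one should be able to run that proof essentially unchanged. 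Everything else in the argument is formal.

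To conclude: since $A^{*} \neq \emptyset$ and $B^{m} = G$ we get $A^{*}B^{m} = G$, so, using $|A^{*}| \le |A| \le |G|^{\delta}$,
\[
|G| \;=\; |A^{*}B^{m}| \;\le\; \Big(\frac{|AB|}{|A|}\Big)^{m}|A^{*}| \;\le\; \Big(\frac{|AB|}{|A|}\Big)^{m}|G|^{\delta}.
\]
Hence $(|AB|/|A|)^{m} \ge |G|^{1-\delta}$, and together with $m \le (c+1)\log|G|/\log|B|$ this yields
\[
\frac{|AB|}{|A|} \;\ge\; |G|^{(1-\delta)/m} \;\ge\; |G|^{\,(1-\delta)\log|B|/((c+1)\log|G|)} \;=\; |B|^{(1-\delta)/(c+1)} .
\]
Thus the lemma holds with $\epsilon := (1-\delta)/(c+1)$, where $c$ is the Liebeck--Shalev constant. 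The entire weight of the argument is therefore on establishing the normal-set Plünnecke--Ruzsa inequality of the middle paragraph; the rest is bookkeeping.
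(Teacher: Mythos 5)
The paper does not prove this lemma: it simply cites \cite[Proposition 5.2]{GPSSz} and remarks that the proof there rests on the Liebeck--Shalev theorem. Your argument is a correct self-contained proof from the same two ingredients, namely the Liebeck--Shalev diameter bound and a Petridis-style Pl\"unnecke--Ruzsa inequality, so it is in the spirit of the cited source rather than a genuinely different route.

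The one point you leave as a hand-wave is exactly the crux, so let me spell out why it works. Petridis's lemma (valid in arbitrary groups) says: if $A^{*}\subseteq A$ minimises $|XB|/|X|$ over non-empty $X\subseteq A$, with ratio $K=|A^{*}B|/|A^{*}|$, then $|CA^{*}B|\le K|CA^{*}|$ for every finite $C\subseteq G$. To iterate this to $|A^{*}B^{n}|\le K^{n}|A^{*}|$ one needs to get $B^{n-1}$ to the \emph{left} of $A^{*}$, and this is where normality enters: $B$, hence $B^{n-1}$, is normal, so $A^{*}B^{n-1}=B^{n-1}A^{*}$, and therefore
\[
|A^{*}B^{n}| = |B^{n-1}A^{*}B| \le K|B^{n-1}A^{*}| = K|A^{*}B^{n-1}|,
\]
which with induction gives $|A^{*}B^{n}|\le K^{n}|A^{*}|\le (|AB|/|A|)^{n}|A^{*}|$, the second step by minimality of $A^{*}$. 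With that detail filled in, the rest of your bookkeeping ($B^{m}=G$ for $m\le (c+1)\log|G|/\log|B|$, then $|G|\le (|AB|/|A|)^{m}|G|^{\delta}$, then $|AB|/|A|\ge |B|^{(1-\delta)/(c+1)}$) is correct, and $\epsilon=(1-\delta)/(c+1)$ works.
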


Note again that the proof of Lemma \ref{GPSSz} depends on \cite[Theorem 1.1]{LS}. In the course of the proof of Theorem \ref{main} this is the only place where \cite[Theorem 1.1]{LS} is used. 

Fix $\delta$ and let $\epsilon > 0$ be a constant whose existence is assured by Lemma \ref{GPSSz}.

\begin{lemma}
\label{l1}
Let $G$ be a non-abelian finite simple group and $t$ an integer at least $2$. If $A_{1}, \ldots , A_{t}$ are normal subsets in $G$ with $|A_{1} \cdots A_{t-1}| \leq {|G|}^{\delta}$, then $$|A_{1} \cdots A_{t}| \geq (|A_1| \cdots |A_{t}|)^{\epsilon}.$$
\end{lemma}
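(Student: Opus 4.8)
The plan is to bootstrap Lemma~\ref{GPSSz} by applying it repeatedly, peeling off one normal subset at a time from the right. Two preliminary reductions make the bookkeeping clean. First, we may assume $0 < \epsilon < 1$, since replacing $\epsilon$ by $\min\{\epsilon,\tfrac12\}$ only weakens the conclusion of Lemma~\ref{GPSSz}; and we may assume every $A_i$ is nonempty, since otherwise both sides of the asserted inequality are $0$. Second, recall the elementary fact that for nonempty subsets $X,Y$ of a group one has $|XY|\ge\max\{|X|,|Y|\}$: fixing $y\in Y$, the set $Xy\subseteq XY$ has size $|X|$, and symmetrically for $|Y|$. In particular $|A_1\cdots A_j|\le|A_1\cdots A_{t-1}|\le{|G|}^{\delta}$ for every $j$ with $1\le j\le t-1$.

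Given this, for each such $j$ I would apply Lemma~\ref{GPSSz} with $A=A_1\cdots A_j$ (a subset of $G$ of size at most ${|G|}^{\delta}$, by the previous paragraph) and $B=A_{j+1}$ (a normal subset of $G$), obtaining
$$|A_1\cdots A_{j+1}| \;\ge\; |A_1\cdots A_j|\cdot|A_{j+1}|^{\epsilon}.$$
Chaining these inequalities for $j=1,2,\ldots,t-1$ (equivalently, a trivial induction on $t$) yields
$$|A_1\cdots A_t| \;\ge\; |A_1|\cdot\prod_{j=2}^{t}|A_j|^{\epsilon} \;\ge\; |A_1|^{\epsilon}\prod_{j=2}^{t}|A_j|^{\epsilon} \;=\; \Big(\prod_{i=1}^{t}|A_i|\Big)^{\epsilon},$$
where the middle inequality uses $|A_1|\ge 1$ together with $\epsilon\le 1$. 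This is exactly the claimed bound.

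There is no real obstacle here; the argument is just an iteration of Lemma~\ref{GPSSz}. The single point deserving care is that the hypothesis $|A|\le{|G|}^{\delta}$ of Lemma~\ref{GPSSz} must remain valid at every stage of the peeling, and this is precisely what the monotonicity $|XY|\ge|X|$ extracts from the one global assumption $|A_1\cdots A_{t-1}|\le{|G|}^{\delta}$. It is worth emphasising that it is the product of the \emph{first} $t-1$ sets, not of all $t$ of them, that must be bounded: the last factor $A_t$ is allowed to be arbitrary, which is what will later let us use this lemma to dispose of a ``small'' prefix of a product of normal subsets while leaving a large remaining factor untouched.
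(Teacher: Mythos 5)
Your proof is correct and is exactly what the paper's one-line proof (``apply Lemma~\ref{GPSSz} $t-1$ times'') intends; you have simply filled in the two small details the paper leaves implicit, namely that $|A_1\cdots A_j|\le|A_1\cdots A_{t-1}|\le|G|^{\delta}$ at each intermediate stage (via $|XY|\ge|X|$) and that $|A_1|\ge|A_1|^{\epsilon}$ since $\epsilon\le 1$. Nothing to flag.
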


\begin{proof}
This follows by applying Lemma \ref{GPSSz} $t-1$ times.  
\end{proof}

Lemma \ref{l1} has an immediate consequence. 

\begin{corollary}
\label{newcor}
Let $G$ be a non-abelian finite simple group and let $A_{1}, \ldots , A_{j}$ be normal subsets in $G$. Let $t$ be the least integer such that 
$|A_{1}| \cdots |A_{t}| > {|G|}^{1/\epsilon}$. Then $|A_{1} \cdots A_{t}| \geq |G|^{\delta}$.
\end{corollary}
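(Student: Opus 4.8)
The plan is to split into two cases according to whether $t = 1$ or $t \geq 2$, and in each case feed the defining inequality $|A_1| \cdots |A_t| > {|G|}^{1/\epsilon}$ into an appropriate earlier result. First, if $t = 1$, then by minimality of $t$ we have $|A_1| > {|G|}^{1/\epsilon} \geq |G|$ (since $0 < \epsilon < 1$ forces $1/\epsilon > 1$), which is absurd as $A_1 \subseteq G$; alternatively, and more usefully, the case $t=1$ only arises when $|A_1| > |G|^{1/\epsilon}$, which cannot happen, so in fact $t \geq 2$ always. (One should double-check the edge behaviour: if some $|A_i| = 0$ the product is never positive, so implicitly all $A_i$ are non-empty; and one needs $j$ large enough that such a $t$ exists, which is part of the hypothesis "Let $t$ be the least integer such that \ldots".)

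So I would assume $t \geq 2$ and apply Lemma \ref{l1} to the normal subsets $A_1, \ldots, A_t$. To do so I must verify its hypothesis, namely $|A_1 \cdots A_{t-1}| \leq {|G|}^{\delta}$. Here is where minimality of $t$ enters: since $t$ is the \emph{least} integer with $|A_1| \cdots |A_t| > {|G|}^{1/\epsilon}$, we have $|A_1| \cdots |A_{t-1}| \leq {|G|}^{1/\epsilon}$ — wait, that gives an upper bound of ${|G|}^{1/\epsilon}$, not ${|G|}^{\delta}$, and $1/\epsilon > 1 > \delta$, so this does not immediately match the hypothesis of Lemma \ref{l1}.

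The main obstacle is therefore exactly this mismatch: Lemma \ref{l1} requires $|A_1 \cdots A_{t-1}| \leq {|G|}^{\delta}$ (a genuinely small set), whereas minimality of $t$ only gives $|A_1| \cdots |A_{t-1}| \leq {|G|}^{1/\epsilon}$, which is a much weaker bound. I expect the resolution is that the statement should be read with $t$ chosen relative to the \emph{partial products of the sets} $|A_1 \cdots A_i|$ rather than the products of cardinalities $|A_1| \cdots |A_i|$, or else that one iterates: repeatedly replace the first two sets $A_1, A_2$ by their product $A_1 A_2$ (still a normal subset), shrinking the list, until the partial product first exceeds ${|G|}^{\delta}$; by Lemma \ref{GPSSz} applied one more time the resulting product jumps past ${|G|}^{\delta}$ in a single controlled step, and one checks the exponent bookkeeping gives $|A_1 \cdots A_t| \geq |G|^{\delta}$. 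Concretely: let $t'$ be least with $|A_1 \cdots A_{t'}| > {|G|}^{\delta}$; then automatically $|A_1 \cdots A_{t'}| \geq {|G|}^{\delta}$, and one must show $t' \leq t$, i.e. that $|A_1| \cdots |A_t| > {|G|}^{1/\epsilon}$ already forces $|A_1 \cdots A_t| > {|G|}^{\delta}$. If $|A_1 \cdots A_{t-1}| \leq {|G|}^{\delta}$, Lemma \ref{l1} gives $|A_1 \cdots A_t| \geq (|A_1| \cdots |A_t|)^{\epsilon} > {|G|}^{(1/\epsilon)\cdot\epsilon} = |G| \geq {|G|}^{\delta}$, done; and if instead $|A_1 \cdots A_{t-1}| > {|G|}^{\delta}$ then a fortiori $|A_1 \cdots A_t| \geq |A_1 \cdots A_{t-1}| > {|G|}^{\delta}$. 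Either way $|A_1 \cdots A_t| \geq {|G|}^{\delta}$, which is the claim. The only delicate point is the first subcase, where I use $(1/\epsilon) \cdot \epsilon = 1$ and $\delta < 1$; everything else is monotonicity of set products under appending a non-empty set.
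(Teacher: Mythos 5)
Your final case split (on whether $|A_1\cdots A_{t-1}|\leq |G|^{\delta}$ or not) is correct and recovers exactly the paper's argument, which is merely phrased in contrapositive form: the paper assumes $|A_1\cdots A_t| < |G|^{\delta}$ for contradiction, notes that monotonicity then forces $|A_1\cdots A_{t-1}| < |G|^{\delta}$ so that Lemma~\ref{l1} applies, and concludes $|A_1\cdots A_t| > |G|$, which is absurd. The exploratory detour in the middle (reinterpreting $t$ via partial products of the sets, or iterating Lemma~\ref{GPSSz}) is unnecessary and the perceived ``mismatch'' is resolved precisely by your Case~2; the statement is correct as written, and your closing two-case resolution is the intended proof.
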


\begin{proof}
If $|A_{1} \cdots A_{t}| < {|G|}^{\delta}$, then $|A_{1} \cdots A_{t}| \geq (|A_1| \cdots |A_{t}|)^{\epsilon} > |G|$ by Lemma \ref{l1}. A contradiction. 
\end{proof}

Corollary \ref{newcor} is applied in the following. 

\begin{lemma}
\label{ez}
In proving Theorem \ref{main} (i.e. Conjecture \ref{c1}) we may assume that each normal subset $S_i$ has size at least ${|G|}^{\delta}$.
\end{lemma}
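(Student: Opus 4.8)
The plan is to reduce the general case of Conjecture~\ref{c1} to its special case in which every normal subset involved is large, by \emph{bundling} consecutive short normal subsets into longer ones with the help of Corollary~\ref{newcor}. So suppose we already know the conclusion in that large case: say there is a constant $c_{0}$ such that $S_{1}\cdots S_{k}=G$ whenever $S_{1},\dots,S_{k}$ are normal subsets of a non-abelian finite simple group $G$ with each $|S_{i}|\ge|G|^{\delta}$ and $\prod_{i=1}^{k}|S_{i}|\ge|G|^{c_{0}}$. I claim Conjecture~\ref{c1} then follows outright, with an absolute constant $c$ depending only on $c_{0}$, $\delta$ and $\epsilon$; this is precisely the assertion of the lemma.

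Given normal subsets $S_{1},\dots,S_{k}$ of $G$ with $\prod_{i=1}^{k}|S_{i}|\ge|G|^{c}$ (the value of $c$ to be fixed below), I would first cut the index set $\{1,\dots,k\}$ greedily into consecutive blocks: take $B_{1}=\{1,\dots,t_{1}\}$ with $t_{1}$ least such that $|S_{1}|\cdots|S_{t_{1}}|>|G|^{1/\epsilon}$, then start $B_{2}$ at $t_{1}+1$ and choose it minimally in the same way, and so on. This yields complete blocks $B_{1},\dots,B_{m}$ together with a possibly empty leftover block $B'$ of remaining indices whose product of sizes is at most $|G|^{1/\epsilon}$. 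Put $T_{j}=\prod_{i\in B_{j}}S_{i}$ for $j<m$ and $T_{m}=\prod_{i\in B_{m}\cup B'}S_{i}$. It is essential that the blocks be consecutive, so that $T_{1}\cdots T_{m}=S_{1}\cdots S_{k}$ in spite of non-commutativity. Each $T_{j}$ is again a normal subset; by Corollary~\ref{newcor} the product over the minimal block $B_{j}$ already has size at least $|G|^{\delta}$, and since $|XY|\ge\max(|X|,|Y|)$ for any nonempty $X,Y$ in a group, absorbing the leftover indices into the last block keeps $|T_{m}|\ge|G|^{\delta}$. Thus $T_{1},\dots,T_{m}$ are normal subsets of $G$, all of size at least $|G|^{\delta}$, whose product is exactly $S_{1}\cdots S_{k}$.

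The step I expect to be the real point is making $m$ large enough for the assumed large case to apply, and this is what pins down $c$. For a complete block $B_{j}$, minimality of its last index forces the product of the $|S_{i}|$ over $B_{j}$ with the final factor deleted to be at most $|G|^{1/\epsilon}$; that final factor is at most $|G|$; so $\prod_{i\in B_{j}}|S_{i}|\le|G|^{1+1/\epsilon}$, while the leftover block contributes at most $|G|^{1/\epsilon}$. Hence $|G|^{c}\le\prod_{i=1}^{k}|S_{i}|\le|G|^{\,m(1+1/\epsilon)+1/\epsilon}$, giving $m\ge(c\epsilon-1)/(1+\epsilon)$. Choosing $c$ large enough that this lower bound is at least $c_{0}/\delta$, for instance $c=\epsilon^{-1}\bigl(1+c_{0}(1+\epsilon)/\delta\bigr)$, we get $\prod_{j=1}^{m}|T_{j}|\ge|G|^{m\delta}\ge|G|^{c_{0}}$, so the assumed large case yields $T_{1}\cdots T_{m}=G$, i.e.\ $S_{1}\cdots S_{k}=G$. (Note $c>1/\epsilon$ forces $m\ge1$, so the construction is non-vacuous, and $k\ge c$ holds automatically since $\prod_{i}|S_{i}|\le|G|^{k}$, so no small-$k$ pathology arises.) Apart from this bookkeeping and the care needed for non-commutativity and for the leftover block, the argument is an entirely routine consequence of Corollary~\ref{newcor}.
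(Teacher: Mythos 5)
Your proof is correct and takes essentially the same approach as the paper: greedily cut $\{1,\dots,k\}$ into consecutive blocks whose size-products lie in $(|G|^{1/\epsilon},|G|^{1+1/\epsilon}]$, apply Corollary~\ref{newcor} to each block to get normal sets of size at least $|G|^\delta$, and take $c$ large enough that the number of blocks is large enough to invoke the assumed special case. The only (cosmetic) differences are that you phrase the assumed special case as ``large total product of large factors'' with constant $c_0$ whereas the paper phrases it as ``at least $a$ large factors'' (these are equivalent, as $\prod|A_i|\ge|G|^{j\delta}$ when each $|A_i|\ge|G|^\delta$), and that you explicitly absorb the leftover indices into the final block, while the paper simply discards them, relying on $G\cdot X=G$.
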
 

\begin{proof}
Assume that there exists a constant $a$ such that whenever $j$ is an integer at least $a$ and $A_{1}, \ldots , A_{j}$ are normal subsets of $G$ satisfying $|A_{i}| \geq {|G|}^{\delta}$ for every $i$ with $1 \leq i \leq j$, then $A_{1} \cdots A_{j} = G$. 

Let $S_{1}, \ldots , S_{k}$ be arbitrary normal subsets in $G$ such that $\prod_{i=1}^{k} |S_{i}| \geq {|G|}^{a (1 + 1/\epsilon)}$. We claim that $S_{1} \cdots S_{k} = G$. 

We choose the smallest number $t_{1}$ such that $|S_{1}| \cdots |S_{t_1}| > {|G|}^{1/\epsilon}$. It is clear that $|S_{1}| \cdots |S_{t_{1}}| \leq {|G|}^{1 + 1/\epsilon}$ and by Corollary \ref{newcor} we have $|S_{1} \cdots S_{t_{1}}| \geq {|G|}^{\delta}$.

Similarly, we choose numbers $0 = t_{0}$, $t_{1}, \ldots , t_{j}$ with $1 \leq t_{1} < t_{2} < \ldots < t_{j} \leq k$ such that for every $i$ with $0 \leq i \leq j-1$ we have ${|G|}^{1/\epsilon} < |S_{t_{i}+1}| \cdots |S_{t_{i+1}}| \leq {|G|}^{1 + 1/\epsilon}$ and hence 
$|A_{i+1}| \geq {|G|}^{\delta}$ where $A_{i+1} = S_{t_{i}+1} \cdots S_{t_{i+1}}$.

The condition $\prod_{i=1}^{k} |S_{i}| \geq {|G|}^{a (1 + 1/\epsilon)}$ implies $j \geq a$. The lemma now follows from the first paragraph of the proof. 
\end{proof}


Theorem \ref{main} follows from Lemma \ref{ez} together with Theorem \ref{uj}. The rest of the paper is devoted to the proof of Theorem \ref{uj}.

\section{Groups of bounded rank}

Theorem \ref{uj} is true for every non-abelian simple group $G$ of order at most $2^{1/(1-\delta)}$. To see this observe that under this condition each $S_{i}$ has size larger than $|G|/2$, and apply the following lemma found in \cite[p. 58; 10]{Jacobson}.  

\begin{lemma}
\label{Jacobson}
If $A$ and $B$ are subsets of a finite group $G$ with $|A|+|B| > |G|$, then $AB = G$.
\end{lemma}  

\begin{proof}
Fix $g \in G$. Since $|A| + |gB^{-1}| > |G|$, we have $A \cap gB^{-1} \not= \emptyset$ and therefore $g = a b$ for some $a \in A$ and $b \in B$. 
\end{proof}

It is mentioned in the Introduction that a stronger form of Conjecture \ref{c1} is known to hold for finite simple groups of Lie type of bounded rank. This is proved by Gill, Pyber, Szab\'o \cite[Theorem 2]{GPSz} using the Product Theorem. 


We now prove Theorem \ref{uj} for finite simple groups of bounded rank. Together with Lemma \ref{ez} this yields a shorter and more direct proof of Conjecture \ref{c1} in this case. 

It is observed by Nikolov and Pyber in \cite{NikolovPyber} that a result of Gowers \cite{Gowers} implies the following. 

\begin{lemma}  
\label{trick}
Let $G$ be a finite group and let $m$ denote the dimension of the smallest non-trivial complex irreducible representation of $G$. If $A$, $B$, $C$ are subsets of $G$ such that $|A||B||C| \geq |G|^{3}/m$, then $ABC = G$. 
\end{lemma}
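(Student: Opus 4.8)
The plan is to prove Lemma~\ref{trick} by a Fourier-analytic count on $G$, which is exactly Gowers' quasirandomness argument specialized to indicator functions. The reduction is immediate: to show $ABC = G$ it suffices to prove, for each fixed $g \in G$, that the equation $abc = g$ has a solution with $a \in A$, $b \in B$, $c \in C$; replacing $C$ by $Cg^{-1}$ (which has the same cardinality and leaves the hypothesis $|A||B||C| \ge |G|^{3}/m$ unchanged), it is enough to show that
\[
N \;:=\; \#\{(a,b,c) \in A \times B \times C : abc = 1\} \;>\; 0 .
\]

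Next I would expand $N$ by non-abelian Fourier inversion. For an irreducible representation $\rho$ of $G$, of degree $d_\rho$ and character $\chi_\rho$, write $\widehat{1_A}(\rho) = \sum_{a \in A}\rho(a)$ for the matrix-valued Fourier coefficient, and similarly for $B$ and $C$. Column orthogonality gives $\sum_\rho d_\rho\,\chi_\rho(x) = |G|$ if $x = 1$ and $0$ otherwise, so
\[
N \;=\; \frac{1}{|G|}\sum_{\rho} d_\rho\,\mathrm{Tr}\!\left(\widehat{1_A}(\rho)\,\widehat{1_B}(\rho)\,\widehat{1_C}(\rho)\right),
\]
the sum being over the irreducible representations of $G$. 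The trivial representation contributes the main term $|A||B||C|/|G|$, and it remains to bound the contribution of the non-trivial $\rho$. For this I would use Cauchy--Schwarz for the Hilbert--Schmidt inner product together with cyclicity of the trace, $\bigl|\mathrm{Tr}(\widehat{1_A}(\rho)\widehat{1_B}(\rho)\widehat{1_C}(\rho))\bigr| \le \|\widehat{1_C}(\rho)\|_{\mathrm{op}}\,\|\widehat{1_A}(\rho)\|_{2}\,\|\widehat{1_B}(\rho)\|_{2}$ with $\|\cdot\|_2$ the Frobenius norm. Plancherel's identity reads $\sum_\rho d_\rho\|\widehat{1_A}(\rho)\|_2^2 = |G|\,|A|$, and likewise for $B$ and $C$; hence for every non-trivial $\rho$, since $d_\rho \ge m$, one has $\|\widehat{1_C}(\rho)\|_{\mathrm{op}} \le \|\widehat{1_C}(\rho)\|_2 \le (|G|\,|C|/m)^{1/2}$. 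Pulling out this uniform bound and applying Cauchy--Schwarz once more, $\sum_{\rho \ne 1} d_\rho\|\widehat{1_A}(\rho)\|_2\|\widehat{1_B}(\rho)\|_2 \le (|G||A|)^{1/2}(|G||B|)^{1/2}$, so the error term has absolute value at most $(|G|\,|A||B||C|/m)^{1/2}$, whence
\[
N \;\ge\; \frac{|A||B||C|}{|G|} - \left(\frac{|G|\,|A||B||C|}{m}\right)^{1/2},
\]
and the right-hand side is non-negative precisely when $|A||B||C| \ge |G|^{3}/m$.

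It then remains to upgrade ``$N \ge 0$'' to ``$N \ge 1$'' so that the non-strict hypothesis suffices; here I would analyse the equality case. If $N = 0$ under $|A||B||C| \ge |G|^3/m$, then all the above inequalities must be equalities, which forces some non-trivial $\rho$ with $d_\rho = m$ for which $\widehat{1_C}(\rho)$ has rank one and $\|\widehat{1_C}(\rho)\|_2^{2} = |G||C|/m$; but that single term already contributes $d_\rho\|\widehat{1_C}(\rho)\|_2^2 = |G||C|$ to the Plancherel sum for $C$, which exceeds its total $|G||C|$ since the trivial representation contributes $|C|^{2} > 0$ — a contradiction. Hence $N > 0$, and as $N$ is an integer, $N \ge 1$, so $ABC = G$.

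The technical heart — and the only genuine obstacle — is getting the non-abelian Fourier bookkeeping exactly right: the Plancherel normalization, the passage between operator and Frobenius norms under the trace, and the borderline equality analysis. Everything else is formal. Alternatively, one may simply invoke Gowers' mixing inequality for three sets in a group whose smallest non-trivial representation has dimension $m$ \cite{Gowers}, combined with the $C \mapsto Cg^{-1}$ reduction, as observed by Nikolov and Pyber \cite{NikolovPyber}.
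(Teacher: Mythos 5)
Your proof is correct. The paper does not prove Lemma~\ref{trick} at all --- it simply quotes it as Nikolov and Pyber's observation \cite{NikolovPyber} deduced from Gowers \cite{Gowers} --- and what you have written is precisely the standard trace/Plancherel argument underlying that citation: the reduction $C \mapsto Cg^{-1}$, the expansion of the solution count via the second orthogonality relation, the operator-norm bound $\|\widehat{1_C}(\rho)\|_{\mathrm{op}} \le (|G||C|/m)^{1/2}$ for non-trivial $\rho$, and Cauchy--Schwarz over the remaining representations. Your treatment of the borderline case $|A||B||C| = |G|^{3}/m$ is a genuine plus, since the lemma as stated has a non-strict hypothesis; the only point worth making explicit there is that $N=0$ forces the non-trivial part of the sum to have absolute value equal to the (positive) main term $|A||B||C|/|G|$, so some non-trivial $\rho$ with $d_\rho\|\widehat{1_A}(\rho)\|_2\|\widehat{1_B}(\rho)\|_2 > 0$ indeed exists, after which your Plancherel contradiction (a single non-trivial $\rho$ would carry mass at least $|G||C|$ while the trivial representation already contributes $|C|^2 > 0$) goes through verbatim.
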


Let $G$ be a finite simple group of Lie type of rank $r$. As noted in \cite[Section 2]{GPSz}, using Lemma \ref{trick}, it follows that if $A$, $B$, $C$ are subsets of $G$ each of size larger than ${|G|}^{1- 1/(24 r^{2})}$, then $ABC = G$. 

Choose $\delta$ such that $\delta > 1 - 1/(24 r^{2})$ holds. Theorem \ref{uj} then follows for finite simple groups of Lie type of rank at most $r$ since $S_{1}S_{2}S_{3} = G$ by the previous paragraph.   

\begin{lemma}
\label{nagy}
In proving Theorem \ref{uj} we may assume that $G$ is an alternating group $\mathrm{Alt}(n)$ or $G$ is a classical simple group $\mathrm{Cl}(n,q)$ and in both cases we may assume that $n$ is sufficiently large. 
\end{lemma}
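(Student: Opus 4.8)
The plan is to dispose of all finite simple groups that are \emph{not} alternating or classical, and among the classical (and alternating) groups to reduce to the case where the natural parameter $n$ exceeds any prescribed bound. First I would invoke the cyclic and sporadic groups together with the exceptional groups of Lie type: there are only finitely many sporadic groups, and the exceptional groups of Lie type all have bounded Lie rank. By the discussion preceding the statement (the application of Lemma \ref{trick} via \cite[Section 2]{GPSz}), Theorem \ref{uj} already holds for finite simple groups of Lie type of rank at most $r$ once $\delta$ is chosen with $\delta > 1 - 1/(24 r^{2})$; taking $r$ to be the maximal Lie rank of an exceptional group settles all of them, and the finitely many sporadic groups are covered by the observation at the start of Section 3 (groups of bounded order, handled via Lemma \ref{Jacobson}) after possibly enlarging $\delta$. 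Thus the only infinite families left to treat are the alternating groups $\mathrm{Alt}(n)$ and the classical groups $\mathrm{Cl}(n,q)$.

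Next I would handle the ``$n$ small'' part of the reduction. For a fixed small value of $n$, the alternating group $\mathrm{Alt}(n)$ is a single group of bounded order, so for each such $n$ Theorem \ref{uj} follows from Lemma \ref{Jacobson} provided $\delta$ is taken close enough to $1$ (as in the opening paragraph of Section 3). For the classical groups, a classical group $\mathrm{Cl}(n,q)$ with $n$ bounded has bounded Lie rank $r$, so again the argument via Lemma \ref{trick} applies: choosing $\delta > 1 - 1/(24 r^{2})$ with $r$ the rank bound corresponding to the finitely many excluded small values of $n$ yields $S_{1}S_{2}S_{3} = G$ directly. Since in all these bounded-rank cases we even obtain the stronger conclusion $S_{1}S_{2}S_{3} = G$, there is no loss in absorbing them into the statement of Theorem \ref{uj}.

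The one subtlety to be careful about is the order of quantifiers on $\delta$: the constant $\delta$ must be \emph{fixed in advance}, and then shown to work for all $G$. So I would first pick a threshold $n_{0}$ large enough that the main argument (to come in later sections) will work for all alternating and classical groups with $n \geq n_{0}$, then note that the finitely many alternating groups with $n < n_{0}$ and the classical groups with $n < n_{0}$ — the latter having Lie rank bounded by some $r_{0} = r_{0}(n_{0})$ — plus all exceptional and sporadic groups together impose only finitely many lower-bound constraints on $\delta$ (each of the form $\delta > 1 - 1/(24 r^{2})$ or $\delta$ close enough to $1$ for a group of bounded order), and these are jointly satisfiable by a single $\delta \in (0,1)$. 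With that $\delta$ fixed, Theorem \ref{uj} is reduced to the case stated in the lemma. I do not expect a genuine obstacle here; the only thing requiring mild care is bookkeeping the finitely many constraints on $\delta$ so that the final $\delta$ is a legitimate universal constant.
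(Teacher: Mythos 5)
Your proposal is correct and follows essentially the same route as the paper: dispose of groups of bounded order via Lemma \ref{Jacobson} (choosing $\delta$ close enough to $1$), dispose of groups of Lie type of bounded rank via Lemma \ref{trick}, and observe that what remains are alternating and classical groups with $n$ large. Your explicit attention to the quantifier ordering on $\delta$ and $n_0$ is a reasonable precaution; since every constraint has the form $\delta > c$ for some $c < 1$ and enlarging $\delta$ can only ease the later large-$n$ argument, the finitely many constraints are jointly satisfiable, exactly as you say.
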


From now on assume that $G$ is an alternating or a classical simple group. 

\section{Large conjugacy classes}

The aim of this section is to pass in Theorem \ref{uj} from (large) normal subsets to large conjugacy classes. 

Let $k(H)$ denote the number of conjugacy classes of a finite group $H$. Part (i) of the next lemma is due to Kov\'acs and Robinson \cite[Lemma 1.1]{KR}, while part (ii) follows from a special case of a result of Liebeck and Pyber \cite[Theorem 1.1]{LP}. 

\begin{lemma}
\label{k}
The following hold.
\begin{enumerate}

\item[(i)] $k(\mathrm{Alt}(n)) \leq 2^{n-1}$ for $n \geq 5$.

\item[(ii)] $k(\mathrm{Cl}(n,q)) \leq q^{d n}$ for some constant $d$.

\end{enumerate}
\end{lemma}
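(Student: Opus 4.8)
The plan is to prove the two parts separately: (i) is an elementary count of the conjugacy classes of $\mathrm{Alt}(n)$ via partitions, while (ii) is a specialization of a known bound on class numbers of groups of Lie type.

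For part (i), recall that the conjugacy classes of $\mathrm{Sym}(n)$ are indexed by the partitions of $n$, so that $k(\mathrm{Sym}(n)) = p(n)$, the partition function. An element with cycle type $\lambda$ lies in $\mathrm{Alt}(n)$ precisely when $\lambda$ has an even number of even parts, and in that case its $\mathrm{Sym}(n)$-class either remains a single $\mathrm{Alt}(n)$-class or splits into two; the split occurs exactly when the $\mathrm{Sym}(n)$-centralizer is contained in $\mathrm{Alt}(n)$, that is, when the parts of $\lambda$ are all odd and pairwise distinct. Hence $k(\mathrm{Alt}(n)) = a(n) + b(n)$, where $a(n)$ counts the partitions of $n$ with an even number of even parts and $b(n)$ counts the partitions of $n$ into distinct odd parts. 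Since a partition into distinct odd parts has no even part at all, $b(n) \le a(n) \le p(n)$, so $k(\mathrm{Alt}(n)) \le 2p(n)$; alternatively this inequality follows from the general bound $k(H) \le [G:H]\cdot k(G)$ (Gallagher). It then suffices to observe that $p(n) \le 2^{n-2}$ for $n \ge 5$, which follows from a finite check of the small cases together with the sub-exponential growth of $p(n)$ (or from the injection of partitions into the $2^{n-1}$ compositions of $n$, sharpened for small $n$). This recovers \cite[Lemma 1.1]{KR}; the only step that is not completely routine is the splitting criterion, which is classical.

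For part (ii), the assertion is a special case of the estimates of Liebeck and Pyber \cite[Theorem 1.1]{LP} for the number of conjugacy classes of a finite group of Lie type in terms of its rank, and I would simply invoke that result. The idea behind it is the Jordan decomposition: a conjugacy class of $\mathrm{Cl}(n,q)$ is determined by the class of its semisimple part $s$ together with the class of its unipotent part inside $C_{G}(s)$. The semisimple classes correspond, up to Weyl-group action, to data supported on a maximal torus, and there are at most a constant multiple of $q^{r}$ of them, where $r$ is the rank; the unipotent classes of the (possibly disconnected) reductive group $C_{G}(s)$, whose rank is at most $r$, are bounded in number by a function of $r$ alone, essentially by the number of partitions of about $2r$ and hence by $4^{r}$. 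Since $r \le n$ for $\mathrm{Cl}(n,q)$, multiplying these estimates gives $k(\mathrm{Cl}(n,q)) \le c\, q^{3n}$ for an absolute constant $c$, and absorbing $c$ into the exponent (using $q \ge 2$) yields $k(\mathrm{Cl}(n,q)) \le q^{dn}$ for a suitable constant $d$.

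The step I expect to be the real obstacle is (ii): a careful count of the semisimple classes across the various twisted and untwisted classical types, together with control of unipotent classes in disconnected centralizers, is precisely the substance of \cite[Theorem 1.1]{LP}. In the present context this is imported rather than reproved, so (ii) amounts to a quotation, and (i) is an elementary exercise once the class structure of $\mathrm{Alt}(n)$ is recalled.
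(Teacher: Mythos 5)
Your proposal cites the same two results the paper does --- Kov\'acs--Robinson \cite[Lemma 1.1]{KR} for (i) and Liebeck--Pyber \cite[Theorem 1.1]{LP} for (ii) --- which is exactly the paper's treatment, since Lemma~\ref{k} is stated there as a pure quotation with no proof supplied. Your added sketches (the partition count with the splitting criterion for $\mathrm{Alt}(n)$, and the Jordan-decomposition heuristic for classical groups) are correct and consistent with those sources, so this is the same approach with a bit of extra exposition.
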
 

Fix $\alpha$ with $0 < \alpha < \delta < 1$. 

\begin{lemma}
\label{l3}
If $A$ is a normal subset of $G$ with $|A| \geq {|G|}^{\delta}$, then $A$ contains a conjugacy class $C$ of $G$ with $|C| \geq {|G|}^{\alpha}$, at least for $n$ sufficiently large.  
\end{lemma}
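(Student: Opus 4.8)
The plan is a direct counting argument built on the bounds for the number of conjugacy classes supplied by Lemma \ref{k}. Since $A$ is a normal subset it is a disjoint union $A = C_{1} \cup \cdots \cup C_{m}$ of conjugacy classes of $G$ with $m \leq k(G)$. If no $C_{i}$ had size at least $|G|^{\alpha}$, then
$$ {|G|}^{\delta} \leq |A| = \sum_{i=1}^{m} |C_{i}| < k(G) \cdot {|G|}^{\alpha}, $$
so the statement reduces to verifying the inequality $k(G) \leq {|G|}^{\delta - \alpha}$ for all sufficiently large $n$. Here $\delta - \alpha$ is a fixed positive constant, so there is plenty of room to spare.

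For $G = \mathrm{Alt}(n)$ I would feed in Lemma \ref{k}(i), namely $k(G) \leq 2^{n-1}$, together with $|G| = n!/2$. Taking logarithms, it suffices that $(n-1)\log 2 \leq (\delta - \alpha)\big(\log(n!) - \log 2\big)$, and since $\log(n!)$ grows like $n \log n$ while the left-hand side is linear in $n$, this holds for all large $n$.

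For $G = \mathrm{Cl}(n,q)$ I would feed in Lemma \ref{k}(ii), namely $k(G) \leq q^{dn}$, together with a standard lower bound of the form $|G| \geq q^{c n^{2}}$ valid with an absolute constant $c > 0$ for every classical simple group; one checks this uniformly across the families $\mathrm{PSL}$, $\mathrm{PSU}$, $\mathrm{PSp}$, $\mathrm{P\Omega}^{\pm}$ directly from the order formulas. Then $\log_{q} k(G) \leq d n$ while $(\delta - \alpha)\log_{q}|G| \geq (\delta - \alpha) c n^{2}$, and the quadratic term beats the linear term once $n \geq n_{0}$, with $n_{0}$ depending only on $c$, $d$, $\delta$, $\alpha$.

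The one point that demands care — rather than a genuine obstacle — is that in the classical case the threshold $n_{0}$ must be independent of $q$. This is automatic: since $q \geq 2$, and both Lemma \ref{k}(ii) and the lower bound on $|G|$ have $q$-exponents that are polynomials in $n$ of degrees $1$ and $2$ respectively with coefficients not depending on $q$, dividing through by $\log q$ removes $q$ from the comparison entirely. It is precisely the linear-in-$n$ exponent in Lemma \ref{k}(ii) (against the quadratic growth of $\log_q |G|$) that makes the argument go through.
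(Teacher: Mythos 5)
Your proposal is correct and follows essentially the same route as the paper: the paper observes that $A$ contains a class of size at least $|A|/k(G) \geq |G|^{\delta}/k(G)$ and then invokes Lemma \ref{k} to get $k(G) \leq |G|^{\delta-\alpha}$ for large $n$, which is precisely the inequality you derive and verify; you have simply written out the asymptotic comparisons that the paper leaves implicit.
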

 
\begin{proof}
Any normal subset $A$ of $G$ contains a conjugacy class $C$ of $G$ of size at least $|A|/k(G)$. If in addition it is assumed that $|A| \geq {|G|}^{\delta}$, then $|C| \geq {|G|}^{\alpha}$ by Lemma \ref{k}, at least for $n$ sufficiently large. 
\end{proof}

Let $r = 8$ if $G = \mathrm{Alt}(n)$ and $r = 3$ if $G = \mathrm{Cl}(n,q)$. Assume that there is a choice of $\alpha$ such that whenever $C_{1}, \ldots , C_{r}$ are conjugacy classes in $G$ each of size at least ${|G|}^{\alpha}$, then $C_{1} \cdots C_{r} = G$. For sufficiently large $n$, each $S_i$ contains a conjugacy class $C_i$ of $G$ of size at least ${|G|}^{\alpha}$ by Lemma \ref{l3}. Thus 
$S_{1} \cdots S_{r} \supseteq C_{1} \cdots C_{r} = G$. 

The following is proved. 

\begin{lemma}
\label{ezz}
In proving Theorem \ref{uj} we may assume that each of the normal subsets $S_i$ is a conjugacy class of size at least ${|G|}^{\alpha}$.
\end{lemma}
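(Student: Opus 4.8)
The plan is to reduce Theorem \ref{uj}, in the remaining cases, to a statement purely about conjugacy classes. By Lemma \ref{nagy} we may already assume $G$ is $\mathrm{Alt}(n)$ or $\mathrm{Cl}(n,q)$ with $n$ large, and we set $r = 8$ in the alternating case and $r = 3$ in the classical case, so that $r$ is exactly the number of factors occurring in the conclusion of Theorem \ref{uj}.

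I would then argue conditionally: suppose there is a choice of $\alpha$ (with $0 < \alpha < \delta$, as fixed earlier) such that any $r$ conjugacy classes of $G$, each of size at least $|G|^{\alpha}$, have product equal to $G$. Given normal subsets $S_1, \ldots, S_r$ of $G$ each of size at least $|G|^{\delta}$, Lemma \ref{l3} produces, for $n$ sufficiently large, a conjugacy class $C_i \subseteq S_i$ with $|C_i| \geq |G|^{\alpha}$ for each $i$. Hence $S_1 \cdots S_r \supseteq C_1 \cdots C_r = G$, which is precisely the assertion of Theorem \ref{uj} for alternating and classical groups; combined with the bounded-rank cases already handled in Section~3, this would yield Theorem \ref{uj} in full. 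Thus it indeed suffices to treat the case where each $S_i$ is a conjugacy class of size at least $|G|^{\alpha}$, which is the content of Lemma \ref{ezz}.

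Within this lemma the only point needing attention is the compatibility of the constants: for Lemma \ref{l3} to apply one needs $\alpha < \delta$ together with the class-number bounds $k(\mathrm{Alt}(n)) \leq 2^{n-1}$ and $k(\mathrm{Cl}(n,q)) \leq q^{dn}$ of Lemma \ref{k}, so that a normal subset of size $|G|^{\delta}$ contains a class of size at least $|G|^{\delta}/k(G) \geq |G|^{\alpha}$ once $n$ is large. This is routine bookkeeping. The genuine difficulty is deferred to the following sections, where one must actually establish the conjugacy-class statement — that $3$ classes in the classical case, or $8$ classes in the alternating case, each of size at least $|G|^{\alpha}$, always multiply up to $G$ — using the character-theoretic input of Guralnick--Larsen--Tiep and of Liebeck--Shalev, and, for $\mathrm{Alt}(n)$, the results of Rodgers.
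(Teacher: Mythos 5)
Your proposal reproduces the paper's argument exactly: fix $r = 8$ (alternating) or $r = 3$ (classical), assume conditionally that any $r$ classes of size at least $|G|^{\alpha}$ multiply to $G$, use Lemma \ref{l3} to extract a class $C_i \subseteq S_i$ of size at least $|G|^{\alpha}$ from each normal subset $S_i$, and conclude $S_1 \cdots S_r \supseteq C_1 \cdots C_r = G$. This matches the paper's proof in both structure and detail.
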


\section{Alternating groups}

In this section we prove Theorem \ref{uj} in the case when $G = \mathrm{Alt}(n)$ and $n \geq 9$ is sufficiently large. 

For each index $i$ with $1 \leq i \leq 8$, the normal set $S_{i}$ is a conjugacy class by Lemma \ref{ezz} with 
$$|S_{i}| \geq {|G|}^{\alpha} = {|\mathrm{Alt}(n)|}^{\alpha} = {\Big(\frac{n!}{2}\Big)}^{\alpha} > \frac{1}{2} {\Big(\frac{n}{2}\Big)}^{\alpha n} > n^{\beta n},$$ for any constant $\beta$ with $\beta < \alpha$, provided that $n$ is sufficiently large. 

A key invariant in \cite{R} is the following. Let $C$ be a conjugacy class of $\mathrm{Alt}(n)$ or of $\mathrm{Sym}(n)$. Define $\delta(C)$ to be $n-t$ where $t$ is the number of orbits of $\langle x \rangle$ (on the underlying set $\{ 1, \ldots , n \}$) for an element $x \in C$. It is implicit in the proof of \cite[Corollary 2.4]{R} that $|C| \leq n^{2 \delta(C)}$. 

The previous two paragraphs imply $\delta(S_{i}) > \beta n / 2$ for every $i$ with $1 \leq i \leq 8$.   

A special case of \cite[Theorem 2.3]{R} is the following. 

\begin{lemma}[Rodgers]
\label{Rodgers2}
Let $C_{1}, \ldots , C_{r}$ be conjugacy classes of $\mathrm{Sym}(n)$ such that every conjugacy class is contained in $\mathrm{Alt}(n)$. If $\sum_{i=1}^{r} \delta(C_{i}) > 3(n-2)$ and $n \geq 5$, then $C_{1} \cdots C_{r} = \mathrm{Alt}(n)$.
\end{lemma}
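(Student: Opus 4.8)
The plan is to prove this combinatorially, in the spirit of \cite{R}, using throughout the identification of $\delta$ with a word length. First I would record that for $x \in \mathrm{Sym}(n)$ the quantity $\delta(x) = n - c(x)$, where $c(x)$ is the number of cycles of $x$ (fixed points included), equals the least number of transpositions in a factorization of $x$; consequently $\delta$ is subadditive over products of conjugacy classes, $\delta(xy) \le \delta(x) + \delta(y)$, and $x$ is even iff $\delta(x)$ is even. Thus each $\delta(C_i)$ is even, $C_1 \cdots C_r \subseteq \mathrm{Alt}(n)$ automatically, and only the reverse inclusion is at issue; by conjugation-invariance it suffices to realise one element of each cycle type of $\mathrm{Alt}(n)$ as a product $x_1 \cdots x_r$ with $x_i \in C_i$.

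Next comes a \emph{concatenation lemma}: if $A$ and $B$ are conjugacy classes of $\mathrm{Sym}(n)$ with $\delta(A) + \delta(B) \le n - 1$, then $AB$ contains a conjugacy class $D$ with $\delta(D) = \delta(A) + \delta(B)$, and $D \subseteq \mathrm{Alt}(n)$ if $A, B \subseteq \mathrm{Alt}(n)$. This is a direct cycle computation: place representatives of $A$ and $B$ on supports overlapping in a single point and aligned so that no cancellation occurs --- e.g.\ an $a$-cycle and a $b$-cycle sharing one point multiply to an $(a+b-1)$-cycle --- and handle general cycle types cycle by cycle. Repeated use of this lemma lets me merge the $C_i$: whenever two of the current classes have $\delta$-sum at most $n-1$, replace them by their concatenation; this strictly lowers the number of classes, keeps everything inside $\mathrm{Alt}(n)$, and preserves $\sum \delta > 3(n-2)$. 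When no such pair remains, every two surviving classes have $\delta$-sum exceeding $n-1$; since each $\delta$ is at most $n-1$ and $3(n-2) > 2(n-1)$ for $n \ge 5$, at least three classes survive, and each of them has large support, $\delta > (n-1)/2$.

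The core of the argument is then to show that a bounded number of conjugacy classes of $\mathrm{Alt}(n)$, each of large support, with $\delta$-sum exceeding $3(n-2)$, multiply onto all of $\mathrm{Alt}(n)$. For this I would prove, first, a Dvir-type statement for two classes: if $\delta(A), \delta(B)$ are large (so $\delta(A) + \delta(B) > n-1$) then $AB$ contains a conjugacy class of maximal possible support --- an $n$-cycle class when $n$ is odd, an $(n-1)$-cycle class when $n$ is even --- obtained by choosing representatives whose supports jointly cover $\{1, \dots, n\}$ with only the forced amount of overlap. Second, an explicit surjectivity statement of Bertram--Boccara type: the product of such a maximal-support even class with two further conjugacy classes that are not too small already exhausts $\mathrm{Alt}(n)$, proved by writing down, for each target cycle type, an appropriate factorization. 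Feeding the two-class statement into the second reduces the general case to three classes, and one concludes; a parallel induction on $n$ (peeling off a fixed point or a short cycle and passing to $\mathrm{Sym}(n-1)$) handles the bookkeeping and the small cases left open by $n \ge 5$.

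The main obstacles I anticipate are twofold. First, the explicit constructive factorizations: the concatenation lemma is routine, but the claims that products of two or three large classes already cover $\mathrm{Alt}(n)$ require genuine combinatorial constructions and a case analysis on cycle types and on the parity of $n$. Second, the arithmetic bookkeeping in the reduction, since the bound $3(n-2)$ is essentially best possible --- which is exactly why the hypothesis must be a strict inequality --- so each merge must be charged carefully, and it may well be cleanest to run the whole argument as an induction on $n$ rather than as a reduction in the number of classes.
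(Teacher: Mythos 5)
First, note that the paper does not prove this lemma at all: it is quoted as a special case of \cite[Theorem 2.3]{R}, so there is no internal argument to compare with, and your outline has to be judged as a self-contained proof attempt of Rodgers' result. As such, the plan is reasonable in spirit -- identifying $\delta$ with the transposition (reflection) length, the parity remark, the concatenation lemma for classes with $\delta(A)+\delta(B)\le n-1$ obtained by chaining cycles through a common point, and the final appeal to Bertram--Boccara type covering theorems are indeed the kind of ingredients such a proof uses -- but the two statements that carry all the weight are asserted rather than proved: the ``Dvir-type'' claim that a product of two large classes contains a maximal-support class, and the surjectivity claim that such a class times the remaining class(es) exhausts $\mathrm{Alt}(n)$, whose threshold is left as ``not too small''. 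Since the bound $3(n-2)$ is essentially sharp, the entire difficulty of the lemma sits precisely in these thresholds and constructions, so deferring them to ``an appropriate factorization for each target cycle type'' leaves the argument without content at the decisive point. There is also a fit problem in the architecture: after the merging stage you may be left with exactly three classes, and forming the maximal-support class $M$ from two of them leaves only \emph{one} further class, whereas your second statement requires $M$ together with \emph{two} further classes; likewise ``each surviving class has $\delta>(n-1)/2$'' holds only with at most one exception.

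More seriously, the Dvir-type statement is false as you state it. Take $n=8$ and let $A=B$ be the $\mathrm{Sym}(8)$-class of fixed-point-free involutions; this class lies in $\mathrm{Alt}(8)$ and $\delta(A)=\delta(B)=4$, so $\delta(A)+\delta(B)=8>n-1$ and each $\delta$ exceeds $(n-1)/2$. An $8$-cycle is odd, hence not in $AB$; and $AB$ contains no $7$-cycle either: if $z=xy$ with $x,y$ involutions, then $xzx^{-1}=z^{-1}$, so $x$ maps $\mathrm{Fix}(z)$ to $\mathrm{Fix}(z^{-1})=\mathrm{Fix}(z)$, and a $7$-cycle in $\mathrm{Sym}(8)$ has a unique fixed point, forcing $x$ to fix it -- contradicting that $x$ is fixed-point-free. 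So $\delta(A)+\delta(B)>n-1$ does not guarantee a maximal-support class in $AB$; the correct statement needs finer hypotheses (for instance that the factors are single long cycles, which the concatenation step can be arranged to produce, or a genuinely larger $\delta$-sum), and consequently the bookkeeping in your merging step must track more than the value of $\delta$. This particular configuration does not occur in the extremal three-class case (where each survivor has $\delta>n-4$), but your reduction as written allows it, so the step as formulated would fail, and the quantitative heart of the proof remains to be supplied.
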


Let $\ell \geq 0$ be the number of $S_{i}$ which are conjugacy classes not only of $\mathrm{Alt}(n)$ but also of $\mathrm{Sym}(n)$. Without loss of generality, let these classes be $S_{1}, \ldots , S_{\ell}$. 

Each $S_{i}$ with $i$ between $\ell+1$ and $8$ has the property that it is not a conjugacy class in $\mathrm{Sym}(n)$, that is, for every $x \in S_{\ell+1} \cup \ldots \cup S_{8}$ the disjoint cycles in $x$ have pairwise different odd lengths. 

Let $x$ be an arbitrary element in $S_{\ell + 1} \cup \ldots \cup S_{8}$. Let $t$ be the number of orbits of $\langle x \rangle$. We claim that $t \leq \sqrt{n}$. Let $s$ be the largest odd integer such that $${(s+1)}^{2}/4 = 1 + 3 + \ldots + (s-2) + s \leq n.$$  If $t > (s+1)/2$, then $x$ must act on at least ${(s+1)}^{2}/4 + (s + 2) > n$ points. This is a contradiction, so $t \leq (s+1)/2 \leq \sqrt{n}$. 

We claim that for any $i$ and $j$ with $\ell + 1 \leq i < j \leq 8$ and any $n \geq 9$, there is a conjugacy class $C_{i,j}$ of $\mathrm{Sym}(n)$ and also of $\mathrm{Alt}(n)$ such that $C_{i,j} \subseteq S_{i}S_{j}$ and $$\delta(C_{i,j}) \geq n - 2\sqrt{n} - 2.$$ 

Let $x$ and $y$ be elements of $S_i$ and $S_j$ respectively, chosen in such a way that in the disjoint cycle decompositions of $x$ and $y$ the numbers $1$ through $n$ are in increasing order and the cycle lengths are in decreasing order. If $n \geq 9$, then the numbers $1$, $2$, $3$, $4$ all appear in the longest cycles of $x$ and $y$. Let $y'$ be the conjugate permutation $y^{(12)(34)} \in S_{j}$. The permutation $xy' \in S_{i}S_{j}$ fixes $1$ and $3$ and thus, having at least two fixed points, the conjugacy class $C_{i,j}$ of $\mathrm{Alt}(n)$ containing $xy'$ is also a conjugacy class of $\mathrm{Sym}(n)$. Observe that the number of orbits of $\langle xy' \rangle$ is at most the number of integers $i$ with $1 \leq i \leq n$ such that $i(xy') \leq i$. This is at most  
$$|\{ i \ : \ ix \leq i \}| + |\{ i \ : \ ix > i \ \mathrm{and} \ (ix)y' < ix \}| \leq$$
$$\leq |\{ i \ : \ ix \leq i \}| + |\{ i \ : \ iy' \leq i \}| \leq 2 \sqrt{n} + 2$$ 
since both $\langle x \rangle$ and $\langle y \rangle$ have at most $\sqrt{n}$ orbits. It follows that $\delta(C_{i,j}) \geq n - 2\sqrt{n} - 2$.

The above give $$\delta(S_{1}) + \cdots + \delta(S_{\ell}) + \delta(C_{\ell+1,\ell+2}) + \cdots + \delta(C_{7,8})
\geq \frac{\beta \ell}{2} n + \Big[ \frac{8-\ell}{2} \Big] (n - 2\sqrt{n} - 2),$$
where $[ (8-\ell) / 2 ]$ denotes the integer part of $(8-\ell)/2$. 

Now choose $\beta$ larger than $7/8$.

Since $\beta > 7/8$ and $\ell \leq 8$, we have 
$$\frac{\beta \ell}{2} n + \Big[ \frac{8-\ell}{2} \Big] (n - 2\sqrt{n} - 2) > \Big(\frac{7}{2} - \frac{\ell}{16}\Big) \cdot n > 3(n-2),$$ for every sufficiently large $n$. Finally Lemma \ref{Rodgers2} gives $$S_{1} \cdots S_{8} \supseteq S_{1} \cdots S_{\ell} \cdot C_{\ell+1,\ell+2} \cdots C_{7,8} = \mathrm{Alt}(n).$$ 

This finishes the proof of Theorem \ref{uj} in case $G = \mathrm{Alt}(n)$.

\section{Classical simple groups}

In this section the proof of Theorem \ref{uj} is completed. 

It may be assumed by Lemma \ref{nagy} and the previous section that $G$ is a classical simple group $\mathrm{Cl}(n,q)$ with sufficiently large $n$. 

A special case of \cite[Theorem 1.3]{GLT} is the following. 

\begin{lemma}[Guralnick, Larsen, Tiep]
\label{GLT}
There exists a $\mu > 0$ such that whenever $G$ is a classical simple group and $g \in G$ satisfies $|C_{G}(g)| \leq {|G|}^{\mu}$, then $|\chi(g)| \leq \chi(1)^{1/10}$ for every $\chi \in \mathrm{Irr}(G)$. 
\end{lemma}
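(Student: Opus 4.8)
The plan is to derive this from \cite[Theorem 1.3]{GLT}, of which it is explicitly a special case, so really the task is to recall that theorem and record what it gives here; I will also say how the lemma is meant to be used, since that explains the particular shape it is given. First I would recall that \cite[Theorem 1.3]{GLT} supplies, for every $\eta > 0$, a constant $\mu = \mu(\eta) > 0$ depending only on $\eta$, such that for every finite classical simple group $G$ and every sufficiently ``generic'' $g \in G$ one has $|\chi(g)| \le \chi(1)^{\eta}$ for all $\chi \in \mathrm{Irr}(G)$. If the genericity hypothesis there is stated through an invariant other than the centralizer order --- the support of $g$, i.e.\ the codimension of the fixed space of $g$ on the natural module, or equivalently the class size $|g^{G}| = |G|/|C_{G}(g)|$ --- then the one genuine preliminary step is to pass between these invariants for classical groups, using the standard comparison of $\log|C_{G}(g)|$, $\dim C_{\overline{G}}(g)$ and the support; this is routine. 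Specializing to $\eta = 1/10$ and letting $\mu$ be the resulting constant gives the lemma. The value $1/10$ is not canonical: any fixed exponent below $1/3$ would do.

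Next I would keep in mind why the lemma is phrased in terms of $|C_{G}(g)|$ with such a small exponent, as this drives the rest of the section. It will be applied with $G = \mathrm{Cl}(n,q)$ of large rank, where Lemmas \ref{l3} and \ref{ezz} already reduce us to three conjugacy classes $C_{1}, C_{2}, C_{3}$ of size at least $|G|^{\alpha}$, so each $x_{i} \in C_{i}$ satisfies $|C_{G}(x_{i})| \le |G|^{1-\alpha}$; if $\alpha$ and $\delta$ were fixed with $1 - \mu < \alpha < \delta < 1$, the lemma applies to every $x_{i}$. For a target $g \in G$ the number of triples $(x_{1}, x_{2}, x_{3}) \in C_{1} \times C_{2} \times C_{3}$ with $x_{1}x_{2}x_{3} = g$ is
$$\frac{|C_{1}||C_{2}||C_{3}|}{|G|}\sum_{\chi \in \mathrm{Irr}(G)}\frac{\chi(x_{1})\chi(x_{2})\chi(x_{3})\overline{\chi(g)}}{\chi(1)^{2}},$$
where $\chi = 1$ contributes $|C_{1}||C_{2}||C_{3}|/|G| > 0$, while by the lemma together with the trivial bound $|\chi(g)| \le \chi(1)$ every other term has absolute value at most $\frac{|C_{1}||C_{2}||C_{3}|}{|G|}\,\chi(1)^{3/10 - 1} = \frac{|C_{1}||C_{2}||C_{3}|}{|G|}\,\chi(1)^{-7/10}$. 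Since $\sum_{\chi \neq 1}\chi(1)^{-7/10} < 1$ for classical simple groups of sufficiently large rank --- the relevant Witten zeta-function bound for classical groups being a result of Liebeck--Shalev, \cite[Theorem 1.1]{LS2} --- the error is strictly smaller than the main term, the count is positive, and $C_{1}C_{2}C_{3} = G$. So, beyond citing the lemma, the remaining work is the choice of constants ($\mu$ from \cite[Theorem 1.3]{GLT} with $\eta = 1/10$, then $1 - \mu < \alpha < \delta < 1$, then $n$ large by Lemma \ref{nagy}) and a one-line character sum.

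The hard part is not in this reduction at all: granting \cite[Theorem 1.3]{GLT} and \cite[Theorem 1.1]{LS2}, everything above is bookkeeping. The genuine difficulty is internal to \cite[Theorem 1.3]{GLT}, which is rightly used here as a black box: its proof rests on Deligne--Lusztig theory, Lusztig's classification of the irreducible characters, and substantial estimates, and there is no short route to a character bound with an exponent as strong as $1/10$. The weaker, more elementary character-ratio bounds available by other means give an exponent too close to $1$ to force $\sum_{\chi \neq 1}\chi(1)^{3\eta - 1}$ below $1$, so the full strength of \cite[Theorem 1.3]{GLT} is exactly what makes the argument work.
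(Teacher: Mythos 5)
Your proposal matches the paper's treatment exactly: the paper offers no proof of Lemma~\ref{GLT} beyond the remark that it is a special case of \cite[Theorem 1.3]{GLT}, and you correctly identify that specializing $\eta = 1/10$ (or indeed any fixed exponent below $1/3$) in that theorem yields the stated bound. The remaining material in your write-up --- the character-sum application and the choice of constants --- reproduces the argument of Lemma~\ref{four1} and the surrounding discussion rather than the proof of the lemma itself, but it is accurate.
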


Let $\zeta^{H}(t) = \sum_{\chi \in \mathrm{Irr}(H)} \chi(1)^{-t}$ for any finite group $H$. A special case of \cite[Theorem 1.1]{LS2} is the following.

\begin{lemma}[Liebeck, Shalev]
\label{zeta}
For any sequence of non-abelian finite simple groups $H \not= \mathrm{PSL}(2,q)$ and any $t > 2/3$, $\zeta^{H}(t) \to 1$ as $|H| \to \infty$.
\end{lemma}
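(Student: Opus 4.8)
This is a special case of \cite[Theorem 1.1]{LS2}, so one may simply quote it; for completeness I describe the shape of a direct argument. Writing $\zeta^{H}(t) = 1 + E_{H}(t)$, where $E_{H}(t) = \sum_{1 \ne \chi \in \mathrm{Irr}(H)} \chi(1)^{-t}$ collects the non-trivial characters, the assertion is that $E_{H}(t) \to 0$ along any sequence of non-abelian finite simple groups $H \ne \mathrm{PSL}(2,q)$ with $|H| \to \infty$, whenever $t > 2/3$. As there are only finitely many sporadic groups and the Tits group, it suffices to treat the alternating groups $\mathrm{Alt}(n)$ with $n \to \infty$ and the groups of Lie type.

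The uniform mechanism is partial summation against the counting function $N_{H}(x) = |\{ \chi \in \mathrm{Irr}(H) : 1 < \chi(1) \le x \}|$. If $m(H)$ denotes the least degree of a non-trivial irreducible character, then
$$E_{H}(t) = t \int_{m(H)}^{\infty} N_{H}(x) \, x^{-t-1} \, dx ,$$
so it is enough to establish, in each family, a bound $N_{H}(x) \le x^{\gamma}$ throughout the relevant range of $x$ with some $\gamma < t$; since $m(H) \to \infty$ (which holds for every simple group with $|H| \to \infty$, including $\mathrm{PSL}(2,q)$), this yields $E_{H}(t) \le \frac{t}{t-\gamma}\, m(H)^{\gamma-t} \to 0$. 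Thus the entire content lies in producing such uniform degree-counting estimates and in identifying the smallest admissible exponent.

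For $\mathrm{Alt}(n)$ one has $m(\mathrm{Alt}(n)) = n-1$, and the required count is elementary: the number of partitions $\lambda \vdash n$ with $f^{\lambda} \le x$ grows very slowly in $x$, uniformly in $n$, as one sees from the hook-length formula and the classification of low-degree irreducible characters of $\mathrm{Sym}(n)$, so here any fixed $t > 0$ works for large $n$ and the alternating groups are not the extremal case. For a group of Lie type $H = G^{F}$ of rank $r$ over $\mathbb{F}_{q}$ one uses Lusztig's parametrisation: the irreducible characters correspond to pairs $(s,\psi)$ with $s$ a semisimple class in the dual group and $\psi$ a unipotent character of $C(s)^{F}$; there are $O(q^{r})$ semisimple classes and only boundedly many unipotent characters of each centraliser (subexponentially many in $n$ when $H$ is classical of rank $n \to \infty$), while the degree of any character in the Lusztig series of $s$ is at least the $p'$-part of $[G^{*}:C(s)]$, hence at least $q^{c}$ where $c$ is the number of positive roots not lying in $C(s)$. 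Summing the geometric contributions according to the codimension of the centraliser, and examining which family maximises the resulting exponent, one finds that the binding cases are $\mathrm{PSL}(3,q)$ and $\mathrm{PSU}(3,q)$: each has $\sim q^{2}$ regular semisimple classes contributing characters of degree $\sim q^{3}$, hence $E_{H}(t) \gg q^{2-3t}$, which tends to $0$ precisely when $t > 2/3$; whereas $\mathrm{PSL}(2,q)$, with $\sim q$ characters all of degree $\sim q$, gives $E_{H}(t) \gg q^{1-t}$ and so forces $t > 1$, which is exactly the family one must exclude. For every other family of Lie type the analogous threshold is at most $1/2$, so $t > 2/3$ is enough uniformly.

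The main obstacle is the uniform estimate $N_{H}(x) \le x^{\gamma}$ for groups of Lie type: making it hold simultaneously for all ranks, all characteristics and all types requires the full Lusztig classification of irreducible characters together with careful control of the generic degrees (products of cyclotomic polynomials in $q$) and of the number of semisimple classes whose centraliser has each prescribed type; this is the technical core of \cite[Theorem 1.1]{LS2}. A softer secondary point is securing the partition-counting estimate for $\mathrm{Alt}(n)$ uniformly in $n$. Granting these two inputs, the partial-summation bound above gives $\zeta^{H}(t) \to 1$ for every $t > 2/3$ along every sequence of finite simple groups avoiding $\mathrm{PSL}(2,q)$, which is the lemma.
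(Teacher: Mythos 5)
Your proposal is correct and matches the paper's treatment: the paper offers no proof of Lemma \ref{zeta} at all, presenting it verbatim as a special case of \cite[Theorem 1.1]{LS2}, which is exactly the citation you lead with. Your supplementary sketch (partial summation against the degree-counting function, the $\mathrm{Alt}(n)$ case, Lusztig series for Lie type, and the identification of $\mathrm{PSL}(2,q)$ as the excluded family and $\mathrm{PSL}(3,q)$, $\mathrm{PSU}(3,q)$ as the cases forcing the threshold $2/3$) is a fair outline of Liebeck and Shalev's argument, but it is extra material rather than a divergence from the paper.
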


Let $D_{1}$, $D_{2}$, $D_{3}$ be conjugacy classes of $G$ each of size at least ${|G|}^{1-\mu}$ where $\mu$ is as in Lemma \ref{GLT}.

\begin{lemma}
\label{four1}
We have $D_{3} \subseteq D_{1} D_{2}$ for every sufficiently large $n$.  
\end{lemma}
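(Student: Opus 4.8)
The plan is to use the standard character-theoretic formula for the number of ways to write a fixed element as a product of elements from two conjugacy classes. For conjugacy classes $D_1, D_2$ of $G$ and an element $g \in G$, the number of pairs $(x_1, x_2) \in D_1 \times D_2$ with $x_1 x_2 = g$ equals
$$
\frac{|D_1| |D_2|}{|G|} \sum_{\chi \in \mathrm{Irr}(G)} \frac{\chi(d_1)\chi(d_2)\overline{\chi(g)}}{\chi(1)},
$$
where $d_i$ is any representative of $D_i$. To prove $D_3 \subseteq D_1 D_2$ it suffices to show this sum is strictly positive for every $g \in D_3$, equivalently that the trivial character's contribution, which is $1$, dominates the absolute value of the sum of all the remaining terms. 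So the goal is to bound $\sum_{\chi \neq 1} |\chi(d_1)\chi(d_2)\chi(g)| / \chi(1)$ by something less than $1$.

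First I would apply Lemma \ref{GLT}. Since $|D_i| \geq |G|^{1-\mu}$ means $|C_G(d_i)| = |G|/|D_i| \leq |G|^{\mu}$, and likewise $|C_G(g)| \leq |G|^{\mu}$ for $g \in D_3$, each of the three elements $d_1, d_2, g$ satisfies the hypothesis of Lemma \ref{GLT}. Hence $|\chi(d_1)|, |\chi(d_2)|, |\chi(g)| \leq \chi(1)^{1/10}$ for every $\chi \in \mathrm{Irr}(G)$. Therefore each non-trivial term is bounded:
$$
\frac{|\chi(d_1)\chi(d_2)\chi(g)|}{\chi(1)} \leq \frac{\chi(1)^{3/10}}{\chi(1)} = \chi(1)^{-7/10}.
$$
Summing over all non-trivial characters gives the bound $\zeta^{G}(7/10) - 1$ for the error term. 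Now invoke Lemma \ref{zeta}: since $7/10 > 2/3$ and $G = \mathrm{Cl}(n,q)$ is not $\mathrm{PSL}(2,q)$ for $n$ large, $\zeta^{G}(7/10) \to 1$ as $|G| \to \infty$, so $\zeta^{G}(7/10) - 1 < 1$ for all sufficiently large $n$. This forces the full character sum to be positive, hence the number of factorizations $g = x_1 x_2$ with $x_i \in D_i$ is positive, so $g \in D_1 D_2$. Since $g \in D_3$ was arbitrary, $D_3 \subseteq D_1 D_2$.

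The main obstacle, such as it is, lies in correctly lining up the quantitative thresholds: one needs the exponent produced by Lemma \ref{GLT} (here $1/10$ applied three times, giving $3/10$) to leave a residual exponent $7/10$ that is safely above the convergence threshold $2/3$ of Lemma \ref{zeta}, and then to observe that $\zeta^G(7/10) - 1$ being smaller than $1$ (in fact tending to $0$) is exactly what "sufficiently large $n$" buys us. One should also note that excluding $\mathrm{PSL}(2,q)$ is harmless, since $\mathrm{PSL}(2,q) = \mathrm{Cl}(2,q)$ has bounded rank and was already disposed of in Section \ref{section.3} (Lemma \ref{nagy}); for $n$ large the classical group $\mathrm{Cl}(n,q)$ is never $\mathrm{PSL}(2,q)$. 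No further structural input about the classical group is needed — the entire argument is the trivial-character-dominates estimate powered by the two black-box lemmas.
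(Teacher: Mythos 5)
Your proof is correct and follows essentially the same route as the paper: express $D_3 \subseteq D_1 D_2$ via the character-sum criterion, use Lemma \ref{GLT} three times to bound each non-trivial summand by $\chi(1)^{-7/10}$, and invoke Lemma \ref{zeta} (with $7/10 > 2/3$) to make $\zeta^G(7/10) - 1$ vanish as $n \to \infty$. The only cosmetic difference is that you write out the full factorization-counting formula with the prefactor $|D_1||D_2|/|G|$ and state explicitly why $\mathrm{PSL}(2,q)$ is excluded, whereas the paper simply cites the positivity criterion and leaves the exclusion implicit.
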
 

\begin{proof}
Let $d_{1} \in D_{1}$, $d_{2} \in D_{2}$, and $d_{3} \in D_{3}$. The conjugacy class $D_{3}$ is contained in the normal set $D_{1}D_{2}$ if and only if the non-negative rational number 
$$f(D_{1}, D_{2}, D_{3}):= \sum_{\chi \in \mathrm{Irr}(G)} \frac{\chi(d_{1}) \chi(d_{2}) \chi(d_{3}^{-1})}{\chi(1)} =
1 + \sum_{1 \not= \chi \in \mathrm{Irr}(G)} \frac{\chi(d_{1}) \chi(d_{2}) \chi(d_{3}^{-1})}{\chi(1)}$$
is positive, by \cite[p. 43]{ASH}.

Let $g \in D_{1} \cup D_{2} \cup D_{3}$ be arbitrary. We have $|C_{G}(g)| \leq {|G|}^{\mu}$. Thus $|\chi(g)| \leq \chi(1)^{1/10}$ for every $\chi \in \mathrm{Irr}(G)$ by Lemma \ref{GLT}. 

It follows that $$\Big| \sum_{1 \not= \chi \in \mathrm{Irr}(G)} \frac{\chi(d_{1}) \chi(d_{2}) \chi(d_{3}^{-1})}{\chi(1)} \Big| \leq 
\sum_{1 \not= \chi \in \mathrm{Irr}(G)} \Big| \frac{\chi(d_{1}) \chi(d_{2}) \chi(d_{3}^{-1})}{\chi(1)} \Big| =$$ 
$$= \sum_{1 \not= \chi \in \mathrm{Irr}(G)} \frac{|\chi(d_{1})| |\chi(d_{2})| |\chi(d_{3}^{-1})|}{\chi(1)} \leq 
\sum_{1 \not= \chi \in \mathrm{Irr}(G)} \chi(1)^{- 7/10} = \zeta^{G}(7/10) - 1.$$

We have $\zeta^{G}(7/10) - 1 \to 0$ as $n \to \infty$, by Lemma \ref{zeta}. Thus $f(D_{1}, D_{2}, D_{3}) > 0$, provided that $n$ is sufficiently large. 
\end{proof}

Choose $\gamma > 0$ (and thus $\alpha$) such that $0 < 1 - \mu \leq \gamma < \alpha < 1$ where $\mu > 0$ is a constant whose existence is assured by Lemma \ref{GLT}. 

The following lemma finishes the proof of Theorem \ref{uj}. 

\begin{lemma}
We have $S_{1}S_{2}S_{3} = G$ for every sufficiently large $n$. 
\end{lemma}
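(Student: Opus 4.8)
The plan is to reduce the claim $S_{1}S_{2}S_{3}=G$ to an application of Lemma \ref{four1}. By Lemma \ref{ezz} we may assume each $S_{i}$ is a conjugacy class, say $S_{i}=C_{i}$, with $|C_{i}|\geq |G|^{\alpha}$. The parameter $\alpha$ has been chosen (in the sentence just before the lemma) so that $1-\mu\leq\gamma<\alpha$, hence $|C_{i}|\geq|G|^{\alpha}\geq|G|^{1-\mu}$ for each $i$. Thus the three classes $C_{1},C_{2},C_{3}$ satisfy exactly the hypothesis of Lemma \ref{four1} with $D_{i}=C_{i}$: each has size at least $|G|^{1-\mu}$.

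The key point is that the normal set $S_{1}S_{2}S_{3}=C_{1}C_{2}C_{3}$ is a union of conjugacy classes of $G$, so to show it equals $G$ it suffices to show it contains \emph{every} conjugacy class of $G$. Pick an arbitrary conjugacy class $D$ of $G$ and set $D_{3}=D$, $D_{1}=C_{1}$, $D_{2}=C_{2}$. This argument only works directly when $|D|\geq|G|^{1-\mu}$; to handle small classes $D$ as well, I would instead argue as follows: by Lemma \ref{four1} applied with $D_{1}=C_{1}$, $D_{2}=C_{2}$, and $D_{3}=C_{3}$, we obtain $C_{3}\subseteq C_{1}C_{2}$, and more usefully, running the same computation with $D_{3}=D$ arbitrary shows $D\subseteq C_{1}C_{2}$ for every conjugacy class $D$ of size at least $|G|^{1-\mu}$ --- but this is not yet all of $G$. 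The cleaner route is to observe that $C_{1}C_{2}$ is already a large normal set and invoke Lemma \ref{four1} in the form: for every conjugacy class $D$ of $G$, the character sum $f(C_{1},C_{2},D)$ is positive once $n$ is large, because the bound $|\chi(d_{1})|,|\chi(d_{2})|\leq\chi(1)^{1/10}$ on the two \emph{fixed} large classes already forces $|\sum_{\chi\neq1}\chi(d_{1})\chi(d_{2})\chi(d^{-1})/\chi(1)|\leq\sum_{\chi\neq1}\chi(1)^{-2/10}\cdot|\chi(d)|/\chi(1)$, and since $|\chi(d)|\leq\chi(1)$ trivially this is at most $\zeta^{G}(1/5)-1$; if $1/5$ is too small one replaces the exponent $1/10$ by a larger constant available from \cite[Theorem 1.3]{GLT} to make the relevant $\zeta$-value tend to $1$.

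Concretely, I would structure the proof as: first, by Lemma \ref{ezz}, take $S_{i}=C_{i}$ conjugacy classes with $|C_{i}|\geq|G|^{\alpha}\geq|G|^{1-\mu}$; second, fix an arbitrary conjugacy class $D$ of $G$ and apply (the proof of) Lemma \ref{four1} with $D_{1}=C_{1}$, $D_{2}=C_{2}$, $D_{3}=D$ --- the two elements $d_{1}\in C_{1}$, $d_{2}\in C_{2}$ have centralizers of size at most $|G|^{\mu}$, so $|\chi(d_{i})|\leq\chi(1)^{1/10}$ by Lemma \ref{GLT}, while $|\chi(d^{-1})|\leq\chi(1)$ always; third, bound $|f(C_{1},C_{2},D)-1|\leq\sum_{\chi\neq1}\chi(1)^{-4/5}=\zeta^{G}(4/5)-1$, which tends to $0$ as $n\to\infty$ by Lemma \ref{zeta} (note $4/5>2/3$, and $G=\mathrm{Cl}(n,q)$ with $n$ large is never $\mathrm{PSL}(2,q)$); fourth, conclude $f(C_{1},C_{2},D)>0$ for $n$ large, so $D\subseteq C_{1}C_{2}\subseteq C_{1}C_{2}C_{3}$; fifth, since $D$ was arbitrary, $S_{1}S_{2}S_{3}=C_{1}C_{2}C_{3}=G$.

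The main obstacle is the uniformity in $n$: Lemma \ref{zeta} and Lemma \ref{GLT} only give asymptotic statements (``for sufficiently large $n$'', ``there exists $\mu$''), so the conclusion only holds for $n$ large, which is exactly why Lemma \ref{nagy} was invoked to reduce to that case. A secondary subtlety is bookkeeping the exponents: one needs the exponent coming from Lemma \ref{GLT} on the \emph{two} large classes (here $2\cdot(1-1/10)=9/5$ of a ``$\chi(1)^{-1}$'' worth, i.e. effective exponent $9/5-1=4/5$ after accounting for the trivial bound on the third factor) to exceed $2/3$ so that Lemma \ref{zeta} applies; since $4/5>2/3$ this is comfortably satisfied, but if one were forced to use the third class trivially one must check the arithmetic carefully. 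Everything else is a direct citation of the lemmas already established.
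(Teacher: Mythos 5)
Your exponent arithmetic is wrong in a way that sinks the whole strategy. Bounding $|\chi(d_1)|, |\chi(d_2)| \leq \chi(1)^{1/10}$ on the two fixed large classes and $|\chi(d^{-1})| \leq \chi(1)$ trivially on the arbitrary class gives
\[
\frac{|\chi(d_1)|\,|\chi(d_2)|\,|\chi(d^{-1})|}{\chi(1)} \;\leq\; \chi(1)^{\,1/10 + 1/10 + 1 - 1} \;=\; \chi(1)^{1/5},
\]
not $\chi(1)^{-4/5}$ (nor $\chi(1)^{-1/5}$ from your intermediate $\zeta^G(1/5)$ claim). The exponent is \emph{positive}, so $\sum_{\chi\neq 1}\chi(1)^{1/5}$ is not small --- it is unbounded. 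And this cannot be rescued by a sharper constant from \cite[Theorem 1.3]{GLT}: even the ideal bound $|\chi(d_1)|,|\chi(d_2)| \leq 1$ yields $\sum_{\chi\neq 1} 1 = k(G)-1 \to \infty$. Once you bound the third factor trivially, the sum does not converge.

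There is a conceptual reason the argument had to fail: the conclusion $D \subseteq C_1 C_2$ for \emph{every} conjugacy class $D$ (equivalently $C_1 C_2 = G$) is simply false in general. Take $D = \{1\}$; then $1 \in C_1 C_2$ if and only if $C_2 = C_1^{-1}$, and $f(C_1, C_2, 1) = 0$ otherwise. No character estimate can make a vanishing sum positive. The paper avoids this by invoking Lemma \ref{four1} only for classes $D_3$ of size at least $|G|^\gamma \geq |G|^{1-\mu}$, the regime where the Guralnick--Larsen--Tiep bound applies to all three factors and produces the convergent exponent $-7/10$. This shows $S_1 S_2$ contains every class of size at least $|G|^\gamma$, so the complement $G \setminus S_1 S_2$ is a union of small classes and has size at most $k(G)\,|G|^\gamma \leq q^{dn}|G|^\gamma$ by Lemma \ref{k}. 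Since $|S_3| \geq |G|^\alpha$ with $\alpha > \gamma$, for large $n$ one gets $|S_1 S_2| + |S_3| > |G|$, and Lemma \ref{Jacobson} --- which you never invoke --- gives $(S_1 S_2)S_3 = G$. The third class $S_3$ is not a free rider inside $C_1 C_2 C_3 \supseteq C_1 C_2$: it is precisely what covers the small classes that $S_1 S_2$ may miss, and the pigeonhole step through Lemma \ref{Jacobson} is the missing idea in your proposal.
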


\begin{proof}
Let $n$ be sufficiently large.

By Lemma \ref{ezz} we may assume that $S_{1}$, $S_{2}$ and $S_{3}$ are conjugacy classes of size at least $|G|^{\alpha}$. The normal set $S_{1}S_{2}$ contains every conjugacy class of $G$ of size at least ${|G|}^{\gamma}$ by Lemma \ref{four1}. Thus $$|S_{1}S_{2}| \geq |G| - k(G) {|G|}^{\gamma} \geq |G| - q^{dn} {|G|}^{\gamma}$$ by Lemma \ref{k}. It follows that 
$$|S_{1}S_{2}| + |S_{3}| \geq |G| - q^{dn} {|G|}^{\gamma} + {|G|}^{\alpha} > |G|$$ if $n$ (hence $|G|$) is sufficiently large. Now, Lemma \ref{Jacobson} gives $(S_{1}S_{2}) S_{3} = G$. 
\end{proof}

This also completes the proof of Theorem \ref{main}.

\end{document}